\documentclass[12pt,reqno]{amsart}
\usepackage{amsmath}
\usepackage{amssymb}
\usepackage[left=2.7cm,top=2.7cm,right=2.7cm,bottom=2.7cm]{geometry}
\usepackage[usenames]{color}
\usepackage{enumerate}
\usepackage[normalem]{ulem}

\begin{document}
\newtheorem{theorem}{Theorem}[section]
\newtheorem{lemma}[theorem]{Lemma}
\newtheorem{definition}[theorem]{Definition}
\newtheorem{conjecture}[theorem]{Conjecture}
\newtheorem{proposition}[theorem]{Proposition}
\newtheorem{claim}[theorem]{Claim}
\newtheorem{algorithm}[theorem]{Algorithm}
\newtheorem{corollary}[theorem]{Corollary}
\newtheorem{observation}[theorem]{Observation}
\newtheorem{problem}[theorem]{Open Problem}
\newtheorem{question}[theorem]{Question}
\newcommand{\noin}{\noindent}
\newcommand{\ind}{\indent}
\newcommand{\om}{\omega}
\newcommand{\pp}{\mathcal P}
\newcommand{\AC}{\mathcal A \mathcal C}
\newcommand{\bAC}{\overline{\AC}}
\newcommand{\ppp}{\mathfrak P}
\newcommand{\N}{{\mathbb N}}
\newcommand{\Z}{{\mathbb Z}}
\newcommand{\LL}{\mathbb{L}}
\newcommand{\R}{{\mathbb R}}
\newcommand{\E}{\mathbb E}
\newcommand{\Prob}{\mathbb{P}}
\newcommand{\eps}{\varepsilon}
\newcommand{\ram}[1]{\hat{R}({#1})}
\newcommand{\G}{{\mathcal{G}}}

\newcommand{\Ss}{{\mathcal S}}
\newcommand{\Nn}{{\mathcal N}}

\newcommand{\ceil}[1]{\left \lceil #1 \right \rceil}
\newcommand{\floor}[1]{\left \lfloor #1 \right \rfloor}
\newcommand{\size}[1]{\left \vert #1 \right \vert}
\newcommand{\dist}{\mathrm{dist}}

\title{On some multicolour Ramsey properties of random graphs}

\author{Andrzej Dudek}
\address{Department of Mathematics, Western Michigan University, Kalamazoo, MI, USA}
\thanks{The first author was supported in part by Simons Foundation Grant \#244712 and by the National Security Agency under Grant Number H98230-15-1-0172. The United States Government is authorized to reproduce and distribute reprints notwithstanding any copyright notation hereon.}
\email{\tt andrzej.dudek@wmich.edu}

\author{Pawe\l{} Pra\l{}at}
\address{Department of Mathematics, Ryerson University, Toronto, ON, Canada}
\thanks{The second author was supported by NSERC and Ryerson University}
\email{\tt pralat@ryerson.ca}

\begin{abstract}
The size-Ramsey number $\ram{F}$ of a graph $F$ is the smallest integer $m$ such that there exists a graph $G$ on $m$ edges with the property that any colouring of the edges of $G$ with two colours yields a monochromatic copy of $F$. In this paper, first we focus on the size-Ramsey number of a path $P_n$ on $n$ vertices. In particular, we show that $5n/2-15/2 \le \ram{P_n} \le 74n$ for $n$ sufficiently large. (The upper bound uses expansion properties of random $d$-regular graphs.) This improves the previous lower bound, $\ram{P_n} \ge (1+\sqrt{2})n-O(1)$, due to Bollob\'as, and the upper bound, $\ram{P_n} \le 91n$, due to Letzter. Next we study long monochromatic paths in edge-coloured random graph $\G(n,p)$ with $pn \to \infty$. Let $\alpha > 0$ be an arbitrarily small constant. Recently, Letzter showed that a.a.s.\ any $2$-edge colouring of $\G(n,p)$ yields a monochromatic path of length $(2/3-\alpha)n$, which is optimal. Extending this result, we show that a.a.s.\ any $3$-edge colouring of $\G(n,p)$ yields a monochromatic path of length $(1/2-\alpha)n$, which is also optimal. In general, we prove that for $r\ge 4$ a.a.s.\ any $r$-edge colouring of $\G(n,p)$ yields a monochromatic path of length $(1/r-\alpha)n$. We also consider a related problem and show that for any $r \ge 2$, a.a.s.\ any $r$-edge colouring of $\G(n,p)$ yields a monochromatic connected subgraph on $(1/(r-1)-\alpha)n$ vertices, which is also tight.
\end{abstract}

\maketitle

\section{Introduction}
Following standard notations, we write $G\to (F)_r$ if any $r$-edge colouring of $G$ (that is, any colouring of the edges of $G$ with $r$ colours) yields a monochromatic copy of $F$. For simplicity, we often write $G\to F$ instead of $G\to (F)_2$. Furthermore, we define the \emph{size-Ramsey number} of $F$ as $\ram{F,r} = \min \{ |E(G)| : G \to (F)_r \}$ and again, for simplicity, $\ram{F} = \ram{F,2}$. 

We consider the size-Ramsey number of the path $P_n$ on $n$ vertices. It is obvious that $\ram{P_n} = \Omega(n)$ and that $\ram{P_n} = O(n^2)$ (for example, $K_{2n}\to P_n$), but the exact behaviour of $\ram{P_n}$ was not known for a long time. In fact, Erd\H{o}s~\cite{E81} offered \$100 for a proof or disproof that 
\[
\ram{P_n} / n \to \infty \quad \text{ and } \quad \ram{P_n} / n^2 \to 0.
\]
This problem was solved by Beck~\cite{B83} in 1983 who, quite surprisingly, showed that $\ram{P_n} < 900 n$. (Each time we refer to inequality such as this one, we mean that the inequality holds for sufficiently large $n$.) A variant of his proof, provided by Bollob\'{a}s~\cite{B01}, gives $\ram{P_n} < 720 n$. Very recently, the authors of this paper~\cite{DP15} used a different and more elementary argument that shows that $\ram{P_n} < 137 n$. The argument was subsequently tuned by Letzter~\cite{L15} who showed that $\ram{P_n} < 91 n$. On the other hand, the first nontrivial lower bound was provided by Beck~\cite{B90} and his result was subsequently improved by Bollob\'as~\cite{B86} who showed that $\ram{P_n} \ge (1+\sqrt{2}) n - O(1)$.

\smallskip

In Section~\ref{sec2}, we show that for any $r \in \N$, $\ram{P_n,r} \ge \frac{(r+3)r}{4}n - O(r^2)$ (Theorem~\ref{thm:gen_lb}), which slightly improves the lower bound of Bollob\'as~\cite{B86} for two colours and generalizes it to more colours. It follows that $\ram{P_n} \ge 5n/2 - O(1)$. In Section~\ref{sec3}, using expansion properties of random $d$-regular graphs, we show that $\ram{P_n} \le 74n$ (Theorem~\ref{thm:two_wholes_d-reg}) which improves the leading constant provided by Letzter~\cite{L15}. We also generalize our upper bound to more colours, showing that $\ram{P_n,r} \le 33 r 4^r n$ (Theorem~\ref{thm:more_colours_upper}). 

\smallskip

In Section~\ref{sec4}, we deal with the following, closely related problem. It is known, due to Gerencs{\'e}r and Gy{\'a}rf{\'a}s~\cite{GG67}, that $K_n \to P_{(2/3+o(1))n}$; due to Gy{\'a}rf{\'a}s, Ruszink{\'o}, S{\'a}rk{\"o}zy, and Szemer{\'e}di~\cite{GRSS07,GRSS07b} and also Figaj and \L{}uczak~\cite{FL07}, we know that $K_n \to (P_{(1/2+o(1))n})_3$. Moreover, these results are best possible.  Unfortunately, very little is known about the behaviour for more colours; although it is conjectured that $K_n \to (P_{(1/(r-1)+o(1))n})_r$ for $r \in \N \setminus \{1, 2\}$, which would be best possible. Clearly, if for some subgraph $G$ of $K_n$, $G \to P_{cn}$, then $K_n \to P_{cn}$ as well. On the other hand, one could expect that sparse subgraphs of $K_n$ ``arrow'' much shorter paths. However, this intuition seems to be false. As a matter of fact, for two colours Letzter~\cite{L15} showed that a.a.s.\ $\G(n,p) \to P_{(2/3-\alpha)n}$, provided that $pn \to \infty$, which is optimal. (Here and later on, $\alpha > 0$ is an arbitrarily small constant.) We adjust her approach (using also some ideas of Figaj and \L{}uczak~\cite{FL07}) and show that a.a.s.\ $\G(n,p) \to (P_{(1/2-\alpha)n})_3$, provided that $pn \to \infty$, which is also optimal. For any $r \in \N \setminus \{1, 2,3\}$ we prove that a.a.s.\ $\G(n,p) \to (P_{(1/r-\alpha)n})_r$, provided that $pn \to \infty$ (Theorem~\ref{thm:multi_Gnp}). This is, perhaps, not sharp but it is a consequence of the poor current understanding of the behaviour for $K_n$. On the other hand, note that the best one can hope for is that a.a.s.\ $\G(n,p) \to (P_{(1/(r-1)+o(1))n})_r$, provided that $pn \to \infty$, since there are $r$-colourings of the edges of $K_n$ (and so also of $\G(n,p)$) with no monochromatic path of length $n/(r-1)$.

\smallskip

In the next section, Section~\ref{sec5}, we continue with similar direction but relax the property of having $P_{cn}$ as a subgraph to having a component of size $cn$. It is known, due to Gy{\'a}rf{\'a}s~\cite{G77} and F\"uredi~\cite{F81}, that for any $r$-colouring of the edges of $K_n$, there is a monochromatic component of order $(1/(r-1)+o(1))n$. Moreover, this is best possible if $r-1$ is a prime power. We show that $K_n$ and $\G(n,p)$ behave very similarly with respect to the size of the largest monochromatic component. More precisely, we prove that a.a.s.\ for any $r$-colouring of the edges of $\G(n,p)$, there is a monochromatic component of order $(1/(r-1)-\alpha)n$, provided that $pn \to \infty$ (Theorem~\ref{thm:mono_comps}). As before, this result is clearly best possible. 

\section{Lower bound on the size-Ramsey number of $P_n$}\label{sec2}

In this section, we improve the lower bound (for two colours) given by Bollob\'as~\cite{B86} who showed that $\ram{P_n} \ge (1+\sqrt{2})(n-1) - 4$. In our result, the leading constant $(1+\sqrt{2})$ is increased to $5/2$. Moreover, we provide a more general result that holds for any number of colours $r$, which improves the trivial lower bound $\ram{P_n,r} \ge (r-1)(n-1)+1$. 

\begin{theorem}\label{thm:gen_lb}
Let $r\ge 1$. Then, for all sufficiently large $n$
\[
\ram{P_n,r} \ge \frac{(r+3)r}{4}n-\frac{r(5r+11)}{4}+3.
\] 
\end{theorem}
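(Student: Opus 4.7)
I would prove the contrapositive: if $G$ has strictly fewer edges than the claimed bound, there exists an $r$-edge-colouring of $G$ containing no monochromatic $P_n$, contradicting $G \to (P_n)_r$. The basic tool is iterative peeling of low-degree vertices. Starting from $G$, repeatedly delete a vertex of degree at most $r-1$ in the current graph, producing a nested sequence of subgraphs $G \supseteq G_1 \supseteq \cdots \supseteq H$ where either $H$ is empty or $\delta(H) \ge r$. Let $v_1, v_2, \ldots, v_k$ be the peeled vertices, listed in the order of removal.

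The plan is to colour $H$ with colour $1$ and then to extend to all of $G$ by processing $v_k, v_{k-1}, \ldots, v_1$ in reverse and carefully distributing each peeled vertex's at-most-$(r-1)$ incident edges (to the already-present vertices) among the colours $\{1, \ldots, r\}$. If this extension always succeeds without creating a monochromatic $P_n$, then in the Ramsey case $|V(H)| \ge n$ must hold, yielding $|E(H)| \ge rn/2$. The identity $\frac{(r+3)r}{4} = \sum_{i=1}^{r}\frac{i+1}{2}$ suggests interpreting this as just the first step of a stratified peeling at thresholds $i = 0, 1, \ldots, r-1$: at each level $i$, either the colouring extension succeeds at that stage (contradicting the Ramsey hypothesis) or the current core has minimum degree at least $i+1$ and contributes an additional $\approx \frac{(i+1)n}{2}$ edges beyond the previous level's count. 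Summing across the strata recovers the leading term $\frac{(r+3)r}{4}n$, while the subtracted $\frac{r(5r+11)}{4}-3$ absorbs boundary losses at each level (a handful of vertices lost near the top of the peeling, combined with the $-1$ terms in each min-degree bound).

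\noindent\textbf{Main obstacle.}
The genuinely subtle step is the colouring extension. A naive assignment of distinct colours to each peeled vertex's $\le r-1$ incident edges does \emph{not} bound the longest monochromatic path: orienting each colour-$c$ edge from the peeled vertex to its already-present neighbour gives an out-degree-at-most-one DAG, so each colour class is a forest, but its components can have unbounded diameter and thus contain arbitrarily long paths chaining through successive peeled vertices. A more careful joint choice of the peeling order (for instance, a degeneracy-based order) and of the colour assignment (for instance, greedy and sensitive to the current longest monochromatic path ending at each endpoint) is required so that each monochromatic subgraph is not merely a forest but has bounded diameter, preventing a $P_n$ from forming through concatenation. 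Making this extension tight at every stratum of the peeling, and showing that the failure of the extension at level $i$ costs exactly $\frac{(i+1)n}{2}$ edges up to lower-order terms, is the technical heart of the argument and what determines the precise leading constant $(r+3)r/4$.
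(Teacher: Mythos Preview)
Your approach is entirely different from the paper's, and the gap you yourself flag is the whole argument. Everything hinges on the ``colouring extension'': you correctly note that assigning distinct colours to the $\le r-1$ back-edges of each peeled vertex produces only a forest in each colour, and such forests can contain arbitrarily long paths; you then promise that ``a more careful joint choice'' of peeling order and colour assignment will keep every monochromatic component of bounded diameter, but you give no construction and no argument. Until that step is actually carried out, nothing follows---you cannot deduce $|V(H)|\ge n$ from $G\to(P_n)_r$, because a monochromatic $P_n$ might live entirely in the peeled shell rather than in $H$. The stratified refinement has a further accounting problem: the $i$-cores are nested, so the edge contributions $\tfrac{(i+1)n}{2}$ you wish to sum are not disjoint and cannot simply be added. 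The identity $\tfrac{(r+3)r}{4}=\sum_{i=1}^r\tfrac{i+1}{2}$ is real, but it is the trace of an induction on $r$ (each step adding roughly $\tfrac{r+2}{2}n$), not of a core decomposition.

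That induction is exactly what the paper does. Given $G\to(P_n)_{r+1}$, it spends colour~$1$ on a large $P_n$-free subgraph and shows that fewer than $\ram{P_n,r}$ edges remain for colours $2,\ldots,r+1$, invoking the inductive hypothesis. The $P_n$-free subgraph comes from a case split on $N=|V(G)|$: if $N>\tfrac{r+2}{2}(n-3)$, a structural lemma on trees (either a spanning tree becomes $P_n$-free after deleting $r$ edges, or it contains $r+2$ vertex-disjoint subtrees each of order at least $\lfloor n/2\rfloor$) supplies roughly $\tfrac{r+2}{2}n$ edges for colour~$1$; if $N$ is small, an explicit vertex partition into one block of size $n-1$ and $r$ small blocks yields a valid $(r+1)$-colouring of all of $G$ directly. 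Degeneracy and peeling play no role.
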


We will need the following auxiliary claim.

\begin{claim}\label{claim:tree}
Let $k\in \N \cup \{0\}$ and $T$ be a tree. Then, at least one of the following two properties holds:
\begin{enumerate}[(i)]
\item\label{claim:i} $T$ has $k$ edges $e_1, e_2, \dots,e_k$ such that $T-\{e_1, e_2, \dots,e_k\}$ contains no $P_n$,
\item\label{claim:ii} $T$ contains $(k+2)$ vertex-disjoint connected subgraphs of order at least $\lfloor n/2\rfloor$ each.
\end{enumerate}
\end{claim}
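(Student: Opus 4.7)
The plan is to proceed by induction on $k$. For the base case $k=0$, if $T$ contains no copy of $P_n$, then (i) is satisfied trivially with the empty set of edges. Otherwise, a copy $v_1v_2\cdots v_n$ of $P_n$ in $T$ can be cut along its middle edge $v_{\lfloor n/2\rfloor}v_{\lfloor n/2\rfloor+1}$, producing two vertex-disjoint sub-paths on $\lfloor n/2\rfloor$ and $\lceil n/2\rceil$ vertices, which gives (ii).

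For the inductive step I would apply the claim at level $k-1$ to $T$. If this produces edges $e_1,\dots,e_{k-1}$ such that $T-\{e_1,\dots,e_{k-1}\}$ contains no $P_n$, I can simply adjoin any further edge of $T$ to obtain (i); the edge case where $|E(T)|<k$ is handled separately, since then $T$ itself is so small that it contains no $P_n$ for $n$ sufficiently large. Otherwise, the hypothesis yields $k+1$ vertex-disjoint connected subgraphs of $T$ of order $\ge\lfloor n/2\rfloor$, and assuming that (i) fails at level $k$ I must produce one additional disjoint large subgraph.

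For this step I would fix a minimum subset $E^*=\{f_1,\dots,f_g\}$ of $E(T)$ such that $T-E^*$ contains no $P_n$; by assumption $g\ge k+1$, so $T-E^*$ has $g+1\ge k+2$ connected components. By the minimality of $E^*$, each $f_i$ lies on some copy $Q_i$ of $P_n$ in $T$ whose unique edge from $E^*$ is $f_i$; the path $Q_i$ therefore straddles the two components of $T-E^*$ joined by $f_i$, splitting along $f_i$ into two sub-paths of total length $n$, at least one of which has $\ge\lceil n/2\rceil$ vertices. The aim is then to extract, via a bottom-up routing on the ``skeleton'' tree obtained by contracting each component of $T-E^*$ to a single vertex, a collection of $g+1$ vertex-disjoint sub-paths of $T$, one per component, each of order $\ge\lfloor n/2\rfloor$ and supplied by the forced path on one of its incident edges of $E^*$.

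The main obstacle will be ensuring vertex-disjointness of the extracted sub-paths when several edges of $E^*$ adjacent to the same component $C$ all have their ``large half'' inside $C$. I would resolve such collisions by exploiting the freedom to re-choose the copy $Q_i$ on each edge (any $P_n$ in $T$ using $f_i$ and no other edge of $E^*$ will serve, and when the split is balanced either side is available), and by allowing the chosen sub-paths to straddle edges of $E^*$ themselves---recall that the desired $k+2$ subgraphs need only be connected subgraphs of $T$, not of $T-E^*$, so ``near'' portions of one component may be spliced to the forced half of an adjacent component through the edge of $E^*$ joining them.
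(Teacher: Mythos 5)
Your approach is genuinely different from the paper's, but as written it has a gap exactly where the real work lies. The paper's proof takes $e$ to be a \emph{middle} edge of some copy of $P_n$ in $T$, so that $T-e$ has two components $T_1,T_2$ each of order at least $\lfloor n/2\rfloor$; it then splits the budget of $k$ allowed deletions between $T_1$ and $T_2$ and invokes the inductive hypothesis on each side separately. Vertex-disjointness of the resulting large subgraphs is then automatic, since $T_1$ and $T_2$ are disjoint. You instead fix a minimum deleting set $E^*$ of size $g\ge k+1$, note that $T-E^*$ has $g+1$ components, and propose to route a large connected subgraph into each component using the forced copies $Q_i$ of $P_n$ through each $f_i\in E^*$. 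The reduction itself is sound: showing that $T$ contains $g+1$ pairwise vertex-disjoint connected subgraphs of order at least $\lfloor n/2\rfloor$ would indeed prove the claim for every $k<g$, and that statement is true (one can prove it by inducting on $|V(T)|$ via a balanced split, essentially the paper's move). But the routing is precisely what needs proof, and you only sketch it. You name the obstacle yourself — several edges of $E^*$ incident to one component may all have their large halves inside that component — and the proposed remedies ("re-choose $Q_i$", "splice across edges of $E^*$") are hopes rather than an argument; nothing in the proposal shows that the bottom-up assignment can always be completed without collisions.

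Two secondary points. The inductive framing at the start is vestigial: once you pass to $E^*$ you construct all $g+1\ge k+2$ subgraphs from scratch and never use the $k+1$ subgraphs supplied by the level-$(k-1)$ hypothesis, so the induction on $k$ does no work and should be dropped or integrated. And the remark that when $|E(T)|<k$ one may say "$T$ contains no $P_n$ for $n$ sufficiently large" misreads the quantifiers: $n$ is fixed in the claim. The correct (and easy) observation is that if $|E(T)|<k\le n-1$ then $|V(T)|\le k<n$, so $T$ has no $P_n$ and (i) holds trivially.
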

\begin{proof}
We prove the statement by induction on $k$. For $k=0$, if \eqref{claim:i} fails, then $T$ contains a copy of $P_n$ and we are done. Indeed, after splitting the path as equally as possible we get two components of the desired order so \eqref{claim:ii} holds. 

Let $k\in \N \cup \{0\}$ and suppose that the statement holds for any integer $i$ satisfying $0\le i\le k$. Again, assume that \eqref{claim:i} fails for $(k+1)$; that is, for any choice of $e_1, e_2, \dots,e_{k+1}$, $T-\{e_1, e_2, \dots,e_{k+1}\}$ contains $P_n$. We will show that \eqref{claim:ii} must hold; that is, $T$ contains $(k+3)$ vertex-disjoint connected subgraphs of order at least $\lfloor n/2\rfloor$ each.

Clearly $T\supseteq P_n$. Hence, let $e$ be such that $T-e$ consists of two components, $T_1$ and $T_2$, each of order at least $\lfloor n/2\rfloor$. By assumption we made (that \eqref{claim:i} fails for $(k+1)$), for any choice of $k_1$ edges $e_1, e_2, \dots,e_{k_1}$ in $T_1$ and $k_2$ edges $f_1, f_2, \dots,f_{k_2}$ in $T_2$ such that $k_1+k_2=k$, either $T_1-\{e_1, e_2, \dots,e_{k_1}\}$ or $T_2-\{f_1, f_2, \dots,f_{k_2}\}$ contains $P_n$. 

If $T_1-\{e_1,e_2,\dots,e_{k_1}\}\supseteq P_n$ and $T_2-\{f_1,f_2,\dots,f_{k_2}\}\supseteq P_n$ for any choice of the edges, then (by inductive hypothesis) $T_1$ and $T_2$ have, respectively, $(k_1+2)$ and $(k_2+2)$ vertex-disjoint connected subgraphs of size $\lfloor n/2\rfloor$, giving $k_1+k_2+4\ge k+3$ vertex-disjoint connected subgraphs of order $\lfloor n/2\rfloor$ in $T$. Therefore, without loss of generality, we may assume that $T_2-\{f_1,f_2,\dots,f_{k_2}\} \nsupseteq P_n$ for some choice of $f_1,f_2,\dots,f_{k_2}$, where $k_2$ is as small as possible. Of course, this implies that $T_1-\{e_1, e_2, \dots,e_{k_1}\} \supseteq P_n$ for any choice of the edges. Now, we need to consider two cases.
If $k_2=0$, then (by inductive hypothesis) $T_1$ has $(k_1+2)$ vertex-disjoint connected subgraphs of order $\lfloor n/2\rfloor$ which, together with $T_2$ yield $(k+3)$ desired large subgraphs in~$T$. On the other hand, if $k_2\ge 1$, then (due to minimality of $k_2$) we infer that for any choice of $f_1,f_2,\dots,f_{k_2-1}$, $T_2-\{f_1,f_2, \dots,f_{k_2-1}\}\supseteq P_n$. Thus, (again, by inductive hypothesis) $T$ has $(k_1+2) + (k_2-1+2) = k+3$ vertex-disjoint connected subgraphs of order $\lfloor n/2\rfloor$, as needed. 
\end{proof}

Now, we are ready to prove the main result of this section. The proof is based on ideas from the proof from~\cite{B86} and \cite{B90}. 

\begin{proof}[Proof of Theorem~\ref{thm:gen_lb}]
We prove the statement by induction on $r$. For $r=1$ the desired inequality is trivially true: $\ram{P_n,1} \ge n-1$. Assume that the statement holds for some $r\in \N$ and, for a contradiction, suppose that it fails for $(r+1)$, that is,
\[
\ram{P_n,r+1} < \frac{(r+4)(r+1)}{4}n-\frac{(r+1)(5(r+1)+11)}{4}+3.
\]
Let $G=(V,E)$ be a graph of order $N$ and size $\ram{P_n,r+1}$, such that $G\to(P_n)_{r+1}$. Clearly, $G$ is connected. We will independently deal with two cases, depending on $N$. 

\smallskip
\emph{Case 1}: $N > (r+2)(n-3)/2$. Let $T$ be any spanning tree of $G$. We apply Claim~\ref{claim:tree} with $k=r$. First, let us assume that property (i) in the claim holds; that is, $T$ has $r$ edges $e_1, e_2, \dots,e_{r}$ such that $T-\{e_1, e_2, \dots,e_{r}\}$ contains no $P_n$. We colour all $(N-1)-r$ edges in $T-\{e_1,e_2,\dots,e_{r}\}$ using the first colour. The number of uncoloured edges is at most 
\begin{align*}
\ram{P_n,r+1} - &(N-r-1) \\
&<   \frac{(r+4)(r+1)}{4}n-\frac{(r+1)(5(r+1)+11)}{4}+3 - \frac{r+2}{2}(n-3)+r+1\\
&= \frac{(r+3)r}{4}n-\frac{r(5r+11)}{4}+3 \le \ram{P_n,r},
\end{align*}
where the last inequality follows from the inductive hypothesis. Thus, we can colour the uncoloured edges with the remaining $r$ colours in such a way that there is no monochromatic $P_n$. Consequently, $G\not \rightarrow(P_n)_{r+1}$, which gives us the desired contradiction.

Assume then that property (ii) in the claim holds; that is, $T$ contains $(r+2)$ vertex-disjoint connected subgraphs of order at least $\lfloor n/2\rfloor$ each. We colour $\lfloor n/2\rfloor-1$ edges of each of the $(r+2)$ components with the first colour. (If some component has more than $\lfloor n/2\rfloor-1$ edges, we select edges to colour arbitrarily.) The number of uncoloured edges is at most
\begin{align*}
\ram{P_n,r+1} - &(r+2)\left(\left\lfloor\frac{n}{2}\right\rfloor-1\right)  \\
&<   \frac{(r+4)(r+1)}{4}n-\frac{(r+1)(5(r+1)+11)}{4}+3 - (r+2)\left(\frac{n}{2}-\frac{3}{2}\right)  \\
&= \frac{(r+3)r}{4}n-\frac{r(5r+11)}{4}+3-(r+1) < \ram{P_n,r},
\end{align*}
and this yields a contradiction ($G\not \rightarrow(P_n)_{r+1}$), as before. 

\smallskip
\emph{Case 2}: $N \le (r+2)(n-3)/2$. Let $U\subseteq V$ be any set of size $|U| = n-1$, and let $W_1, W_2, \dots, W_r$ be an equipartition of $V\setminus U$. Clearly, for any $1\le i\le r$,
\[
|W_i| \le \left\lceil \frac{1}{r}\left( \frac{r+2}{2}(n-3) - (n-1)  \right)  \right\rceil
= \left\lceil  \frac{n-3}{2} - \frac{2}{r} \right\rceil < \frac{n-1}{2} - \frac{2}{r}.
\]
Let $G_i$ be a bipartite subgraph of $G$ induced by the edges between $W_i$ and $W_{i+1}\cup\dots\cup W_r\cup U$. We colour the edges of $G_i$ with the $i$-th colour and the remaining edges (inside $U$ or $W_i$'s) with the last colour. Clearly there is no monochromatic (or, in fact, any) copy of $P_n$ in $U$ or $W_i$'s. Furthermore, each path in $G_i$ must alternate between $W_i$ and $W_{i+1}\cup\dots\cup W_r\cup U$. Thus, the longest path in $G_i$ has at most $2|W_i|+1 <  n$ vertices. We get the desired contradiction ($G\not \rightarrow(P_n)_{r+1}$) for the last time and the proof is finished.
\end{proof}

\section{Upper bound on the size-Ramsey number of $P_n$}\label{sec3}

In this section, we present various upper bounds on $\ram{P_n}$. Corresponding theorems use different approaches and different probability spaces. Subsection~\ref{subsec:Letzter} uses the existing lemma of Letzter. In Subsection~\ref{subsec:two_wholes}, another improvement is developed, which gives the strongest bound. Finally, Subsection~\ref{subsec:multi} deals with more colours.

\smallskip

Let us recall a few classic models of random graphs that we study in this section and later on in the paper. The \emph{binomial random graph} $\G(n,p)$ is the random graph $G$ with vertex set $[n] := \{ 1, 2, \ldots, n\}$ in which every pair $\{i,j\} \in \binom{[n]}{2}$ appears independently as an edge in $G$ with probability~$p$. The \emph{binomial random bipartite graph} $\G(n,n, p)$ is the random bipartite graph $G=(V_1 \cup V_2, E)$ with partite sets $V_1, V_2$, each of order $n$, in which every pair $\{i,j\} \in V_1 \times V_2$ appears independently as an edge in $G$ with probability~$p$. Note that $p=p(n)$ may (and usually does) tend to zero as $n$ tends to infinity.  

Recall that an event in a probability space holds \emph{asymptotically almost surely} (or \emph{a.a.s.}) if the probability that it holds tends to $1$ as $n$ goes to infinity. Since we aim for results that hold a.a.s., we will always assume that $n$ is large enough. For simplicity, we do not round numbers that are supposed to be integers either up or down; this is justified since these rounding errors are negligible to the asymptomatic calculations we will make. Finally, we use $\log n$ to denote natural logarithms.

\smallskip

However, our main results in this section refer to another probability space, the probability space of random $d$-regular graphs with uniform probability distribution. This space is denoted $\mathcal{G}_{n,d}$, and asymptotics are for $n\to\infty$ with $d\ge 2$ fixed, and $n$ even if $d$ is odd.

Instead of working directly in the uniform probability space of random regular graphs on $n$ vertices $\mathcal{G}_{n,d}$, we use the \textit{pairing model} (also known as the \textit{configuration model}) of random regular graphs, first introduced by Bollob\'{a}s~\cite{bollobas2}, which is described next. Suppose that $dn$ is even, as in the case of random regular graphs, and consider $dn$ points partitioned into $n$ labelled buckets $v_1,v_2,\ldots,v_n$ of $d$ points each. A \textit{pairing} of these points is a perfect matching into $dn/2$ pairs. Given a pairing $P$, we may construct a multigraph $G(P)$, with loops allowed, as follows: the vertices are the buckets $v_1,v_2,\ldots, v_n$, and a pair $\{x,y\}$ in $P$ corresponds to an edge $v_iv_j$ in $G(P)$ if $x$ and $y$ are contained in the buckets $v_i$ and $v_j$, respectively. It is an easy fact that the probability of a random pairing corresponding to a given simple graph $G$ is independent of the graph, hence the restriction of the probability space of random pairings to simple graphs is precisely $\mathcal{G}_{n,d}$. Moreover, it is well known that a random pairing generates a simple graph with probability asymptotic to $e^{-(d^2-1)/4}$ depending on $d$, so that any event holding a.a.s.\ over the probability space of random pairings also holds a.a.s.\ over the corresponding space $\mathcal{G}_{n,d}$. For this reason, asymptotic results over random pairings suffice for our purposes. For more information on this model, see, for example, the survey of Wormald~\cite{NW-survey}.

\smallskip

Also, we will be using the following well-known concentration inequality. Let $X \in \textrm{Bin}(n,p)$ be a random variable with the binomial distribution with parameters $n$ and $p$. Then, a consequence of Chernoff's bound (see, for example,~\cite[Corollary~2.3]{JLR}) is that 
$$
\Prob( |X-\E X| \ge \eps \E X) \le 2\exp \left( - \frac {\eps^2 \E X}{3} \right)  
$$
for  $0 < \eps < 3/2$. 

\subsection{Existing approach}\label{subsec:Letzter}

Using the following (deterministic) lemma Letzter showed that $\ram{P_n}< 91 n$.

\begin{lemma}[\cite{L15}]\label{lem:letzter}
Let $G$ be a graph of order $cn$ for some $c>2$. Assume that for every two disjoint sets of vertices $S$ and $T$ such that $|S| = |T| = n(c-2)/4$ we have $e(S,T) \neq 0$. Then, $G\to P_n$.
\end{lemma}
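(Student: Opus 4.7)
The plan is to argue contrapositively: assume $G$ admits a $2$-edge-colouring (red, blue) containing no monochromatic $P_n$, and construct disjoint $S, T \subseteq V(G)$ with $|S| = |T| = k := n(c-2)/4$ and $e_G(S, T) = 0$, which would contradict the hypothesis. We will carry this out via a two-stage depth-first search (DFS).

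\smallskip

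Stage 1. Run DFS on the red subgraph. At every step the active stack induces a red path and therefore has fewer than $n$ vertices, and the vertex set decomposes as $V(G) = P \sqcup A \sqcup U$ (processed, active, unvisited) with no red edge between $P$ and $U$. Since $|P| - |U|$ moves by $\pm 1$ per step from $-cn$ to $+cn$, there is a moment at which $|P| = |U| = (cn - |A|)/2 > (c-1)n/2 > k$ (using $c > 2$). Freezing this moment, every $G$-edge between $P$ and $U$ is forced to be blue, and the bipartite blue graph $H := B[P, U]$ inherits the expansion property that any $S' \subseteq P, T' \subseteq U$ with $|S'|, |T'| \ge k$ contain an $H$-edge.

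\smallskip

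Stage 2. Run DFS on $H$ starting at a vertex of $P$. Because $H$ is bipartite, the stack alternates between $P$ and $U$, so $|A^H \cap P|, |A^H \cap U| \le \lceil |A^H|/2 \rceil \le n/2$. Writing $\alpha = |P^H \cap P|$, $\beta = |U^H \cap U|$, $\gamma = |P^H \cap U|$, $\delta = |U^H \cap P|$, this yields the key bounds
\[
\alpha + \delta = |P| - |A^H \cap P| > (c-1)n/2 - n/2 = 2k \quad \text{and} \quad \beta + \gamma > 2k
\]
at every moment. Since $\alpha$ is non-decreasing and eventually equals $|P| > k$, let $t^*$ be the first moment at which $\alpha = k$. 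The step at $t^*$ pops a $P$-vertex, incrementing $\alpha$ but leaving $\delta$ unchanged (only pushes of $P$-vertices decrease $\delta$). Just before $t^*$, $\alpha = k-1$ combined with $\alpha + \delta > 2k$ gives $\delta > k+1$, whence $\delta(t^*) > k$. From $\beta(t^*) + \gamma(t^*) > 2k$ we also obtain $\beta \ge k$ or $\gamma \ge k$.

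\smallskip

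Conclusion. If $\beta \ge k$, pick $S \subseteq P^H \cap P$ and $T \subseteq U^H \cap U$ of size $k$; if $\gamma \ge k$, pick $S \subseteq P^H \cap U$ and $T \subseteq U^H \cap P$ of size $k$. In each case, $S$ and $T$ lie in opposite parts of the partition $\{P, U\}$ (no red edges, by Stage 1) and in opposite parts of $\{P^H, U^H\}$ (no blue edges, by the DFS invariant on $H$), so $e_G(S, T) = 0$, the desired contradiction. The subtle point is Stage 2's bipartite-DFS observation: the naive bound $\alpha + \beta + \gamma + \delta > 4k$ obtained just from $|A^H| < n$ is not enough to force a favourable pair at one time, and it is the bipartite alternation of $H$, which splits the active stack evenly between $P$ and $U$, that upgrades this to the usable $\alpha + \delta, \beta + \gamma > 2k$.
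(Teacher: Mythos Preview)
The paper does not give its own proof of this lemma (it is cited from~\cite{L15}), but your argument is correct and is essentially the same two-stage DFS approach that the paper uses to prove the closely related Lemma~\ref{lem:2_colours}: apply the DFS partition (Lemma~\ref{lem:obs}) in one colour to obtain two sets with no edges of that colour between them, then apply it again in the bipartite graph of the other colour between those sets, using that the alternating active stack meets each side in at most $n/2$ vertices. The only difference is the stopping rule in the second stage---the paper halts when the processed and unvisited sets have equal size and then splits into four pieces $S_1,S_2,T_1,T_2$ (which yields the slightly stronger conclusion of Lemma~\ref{lem:2_colours}), whereas you halt at the first moment $\alpha=k$ and read off a single good pair directly.
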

In fact, she showed that a.a.s.\ $\G(cn, d/n) \to P_n$ with $c=4.86$ and $d=7.7$. This is an improved version of a result of the authors of this paper~\cite{DP15} and a very similar result of Pokrovskiy~\cite{P14}. Here we show that a slightly stronger bound can be obtained if random $d$-regular graphs are used.

\begin{theorem}\label{thm:one_whole_d-reg}
Let $c = 5.219$ and $d = 30$. Then, a.a.s.\ $\G_{cn,d} \to P_n$, which implies that $\ram{P_n} < 78.3 n$ for sufficiently large $n$.
\end{theorem}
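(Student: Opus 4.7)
The plan is to invoke Lemma~\ref{lem:letzter}. Setting $N := cn$ and $s := (c-2)/(4c)$, so that $sN = n(c-2)/4$, it suffices to show that a.a.s.\ every disjoint pair $S, T \subseteq V(G)$ with $|S| = |T| = sN$ is joined by at least one edge of $G \in \G_{N, d}$. Since any property holding a.a.s.\ in the pairing model also holds a.a.s.\ in $\G_{N, d}$, we work throughout in the pairing model on $dN$ points.

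For a fixed disjoint pair $(S, T)$, we upper-bound $\Prob[e(S, T) = 0]$ as follows. Partition the $dN$ points into groups $A, B, C$ of sizes $dsN$, $dsN$, $d(1 - 2s)N$ lying above $S$, $T$, and $V \setminus (S \cup T)$. A pairing has $e(S, T) = 0$ iff every $A$-point is matched inside $A \cup C$ and every $B$-point inside $B \cup C$. Stratifying such pairings by the numbers $x, y$ of $A$--$C$ and $B$--$C$ pairs (with the rest of each group matched internally) yields an exact formula for $\Prob[e(S, T) = 0]$ as a double sum of products and ratios of factorials. Writing $x = \alpha\,dsN$, $y = \beta\,dsN$ and applying Stirling makes each summand of the form $\exp(N \psi(\alpha, \beta; s, d) + o(N))$ for an explicit smooth $\psi$, so a standard Laplace-type bound gives
\[
\Prob[e(S, T) = 0] \le \exp\bigl(N\,\Psi(s, d) + o(N)\bigr), \quad \Psi(s, d) := \max_{\alpha, \beta \in [0, 1]} \psi(\alpha, \beta; s, d).
\]
By the $S \leftrightarrow T$ symmetry the maximum lies on the diagonal $\alpha = \beta$, and the critical-point equation reduces to the quadratic $s\alpha^{2} - \alpha + (1 - 2s) = 0$, solved by $\alpha^{*} = (1 - \sqrt{1 - 4s(1-2s)})/(2s)$.

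A union bound over the $\binom{N}{sN}\binom{N - sN}{sN} = \exp(N H(s) + o(N))$ ordered disjoint pairs, with $H(s) := -2s\log s - (1 - 2s)\log(1 - 2s)$, then gives
\[
\Prob[\exists \text{ bad } (S, T)] \le \exp\bigl(N (H(s) + \Psi(s, d)) + o(N)\bigr).
\]
For $c = 5.219$ (so $s \approx 0.1542$) and $d = 30$ one computes $\alpha^{*} \approx 0.787$ and, substituting back, $\Psi(s, d) \approx -0.832$, while $H(s) \approx 0.832$. Hence $H(s) + \Psi(s, d) < 0$ and the right-hand side tends to $0$. By Lemma~\ref{lem:letzter} we conclude $G \to P_{n}$ a.a.s., and $|E(G)| = dN/2 = 15\,cn < 78.3\,n$ gives the bound on $\ram{P_{n}}$.

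The main obstacle is the extreme tightness of the final numerical inequality: with these constants $H(s)$ and $-\Psi(s, d)$ agree to about three decimal places. The analysis therefore cannot afford either the cruder entropy bound $\binom{N}{sN}^{2} \le \exp(2Nh(s))$ in place of the sharp $\binom{N}{sN}\binom{N-sN}{sN}$ (since $2h(s) = -2s\log s - 2(1-s)\log(1-s) > H(s)$), nor the independence-style estimate $\Psi(s, d) \approx -ds^{2}$ in place of the full Laplace value (independence would give $-ds^{2} \approx -0.71$, far too weak); both the sharp multinomial rate and the full two-variable optimization of $\psi$ (exploiting the negative correlation between edges in the pairing model) are essential. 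The constants $c = 5.219$ and $d = 30$ are essentially the joint minimizers of the edge count $cd/2$ subject to $H(s) + \Psi(s, d) \le 0$.
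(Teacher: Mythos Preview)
Your proposal is correct and follows essentially the same approach as the paper: both use Lemma~\ref{lem:letzter}, work in the pairing model, stratify the count of pairings with $e(S,T)=0$ by how many $S$-points are matched into $V\setminus(S\cup T)$, apply Stirling, and optimize the resulting exponential rate (your quadratic $s\alpha^{2}-\alpha+(1-2s)=0$ is exactly the paper's $a^{2}-ca+(c^{2}-4)/8=0$ under the change of variables $a=\alpha(c-2)/4$). The only cosmetic difference is that the paper stratifies by a single parameter $a$ (absorbing the $T$--$C$ count into one residual matching factor), whereas you introduce two parameters $(\alpha,\beta)$ and then invoke symmetry to reduce to the diagonal; this avoids having to argue that the symmetric function actually attains its maximum on $\alpha=\beta$, but the end computation is identical.
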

\begin{proof}
Consider $\mathcal{G}_{cn,d}$ for some $c \in (2, \infty)$ and $d \in \N$. Our goal is to show that (for a suitable choice of $c$ and $d$) the expected number of pairs of two disjoint sets, $S$ and $T$, such that $|S|=|T|=n(c-2)/4$ and $e(S,T) = 0$ tends to zero as $n \to \infty$. This, together with the first moment principle, implies that a.a.s.\ no such pair exists and so, by Lemma~\ref{lem:letzter}, we get that a.a.s.\ $\mathcal{G}_{cn,d} \to P_n$. As a result, $\ram{P_n} \le (cd/2 +o(1)) n$. 

Let $a=a(n)$ be any function of $n$ such that $adn \in \Z$ and $0\le a\le (c-2)/4$, and let $X(a)$ be the expected number of pairs of two disjoint sets $S, T$ such that $|S|=|T|=n(c-2)/4$, $e(S,T) = 0$, and $e(S,V \setminus (S \cup T)) = adn$. Using the paring model, it is clear that 
\begin{eqnarray*}
X(a) &=& {cn \choose \frac {c-2}{4} n} { cn - \frac {c-2}{4} n  \choose \frac {c-2}{4} n } { \frac {c-2}{4} dn \choose adn } { \frac {c+2}{2} dn \choose adn} M \left( \frac {c-2}{4} dn - adn \right) (adn)! \\
&& \quad \cdot \ M \left( \frac {c+2}{2} dn - adn + \frac {c-2}{4} dn \right) / M(cdn),
\end{eqnarray*}
where $M(i)$ is the number of perfect matchings on $i$ vertices, that is, 
$$
M(i) = \frac {i!} {(i/2)! 2^{i/2}}.
$$
(Each time we deal with perfect matchings, $i$ is assumed to be an even number.) After simplification we get
\begin{eqnarray*}
X(a) &=& (cn)! \left( \frac {c-2}{4} dn \right)! \left( \frac {c+2}{2} dn \right)! \left( \frac {3c+2}{4} dn - adn \right)! 2^{cdn/2} (cdn/2)! \\
&& \quad \cdot \ \Bigg[ \left( \frac{c-2}{4} n \right)!^2 \left( \frac{c+2}{2} n \right)! \ 2^{( \frac{c-2}{4} dn - adn)/2} \left( \left( \frac{c-2}{4} dn - adn\right) /2 \right)! (adn)! \\
&& \quad \quad \quad \left( \frac {c+2}{2} dn - adn \right)! \ 2^{( \frac{3c+2}{4} dn - adn)/2} \left( \left( \frac{3c+2}{4} dn - adn \right)/2 \right)! (cdn)! \Bigg]^{-1}.
\end{eqnarray*}
Using Stirling's formula ($i! \sim \sqrt{2\pi i} (i/e)^i$) and focusing on the exponential part we obtain
$$
X(a) = \Theta( n^{-3/2} ) e^{f(a,c,d)n},
$$
where
\begin{eqnarray*}
f(a,c,d) &=& c \left(1- \frac d2 \right) \log c + \frac {c-2}{4} (d-2) \log \left( \frac {c-2}{4} \right) + \frac {c+2}{2} (d-1) \log \left( \frac {c+2}{2} \right) \\
&& \quad - \left( \frac {c-2}{4} - a \right) \frac {d}{2} \log \left( \frac {c-2}{4} - a \right) - ad \log a \\
&& \quad - \left( \frac {c+2}{2} - a \right) d \log \left( \frac {c+2}{2} - a \right) + \left( \frac {3c+2}{4} - a \right) \frac {d}{2} \log \left( \frac {3c+2}{4} - a \right).
\end{eqnarray*}
Thus, if $f(a,c,d) \le 0$ for any integer $adn$ under consideration, then $X(a) = O(n^{-3/2}) = o(n^{-1})$. We would get $\sum_{adn} X(a) = o(1)$ (as $adn = O(n)$), the desired property would be satisfied, and the proof would be finished.

It is straightforward to see that 
$$
\frac{\partial f}{\partial a} = - \frac {d}{2} \Big( 2 \log 2 - \log (c-2-4a)+2 \log a - 2 \log(c+2-2a)+\log(3c+2-4a) \Big).
$$
Now, since $\frac{\partial f}{\partial a} = 0$ if and only if $a^2 - ca + (c^2-4)/8 = 0$, function $f(a,c,d)$ has a local maximum for $a = a_0 := c/2 - \sqrt{2c^2 + 8}/4$, which is also a global one on $a\in(-\infty, c/2 +\sqrt{2c^2 + 8}/4)$. Since $a\le (c-2)/4 < c/2 +\sqrt{2c^2 + 8}/4$, we get that
$$
f(a,c,d) \le g(c,d) := f(a_0, c, d).
$$ 
Finally, by taking $c = 5.219$ and $d = 30$, we get $g(c,d) < -0.0005$ and the proof of the first part is finished. Finally, it follows that $\ram{P_n} < 78.3 n$ for $n$ large enough, as $cd/2 = 78.285 < 78.3$. (Of course, constants $c$ and $d$ were chosen as to minimize $cd/2$, provided that $g(c,d) \le 0$.)
\end{proof}

Lemma~\ref{lem:letzter} provides a sufficient condition for $G \to P_n$ that is quite convenient for any good expander $G$. On the other hand, it is not so difficult to see that it can never give an upper bound better than $26.4n$. Indeed, let $\alpha = (c-2)/(4c)$ and $G$ be a graph of order $N=cn$ and average degree $d$ such that for every two disjoint sets of vertices $S$ and $T$ with $|S| = |T| = \alpha N$ we have $e(S,T) \neq 0$. Then the complement of $G$ contains no copy of $K_{\alpha N, \alpha N}$ and the well-known K\H{o}v\'ari, S\'os and Tur\'an~\cite{KST54} inequality (see also Theorem~11 in~\cite{B98}) yields
\[
N \binom{N-1-d}{\alpha N} \le (\alpha N-1) \binom{N}{\alpha N},
\]
which for $N$ sufficiently large implies that $d \ge \frac{\log \alpha}{\log (1-\alpha)} -1$. Thus, the number of edges in $G$ is at least 
\[
\frac{Nd}{2} = \frac{cnd}{2} \ge \frac{c}{2} \left( \frac{\log \alpha}{\log (1-\alpha)} -1\right)n = f(c)n,
\]
where
\[
f(c) := \frac{c}{2}\left( \frac{\log (c-2)/(4c)}{\log (3c+2)(4c) -1}\right).
\]
The above function takes a minimum at $c= c_0 \approx 5.633$ which gives $f(c_0) \approx 26.415$. 

\subsection{Improved approach} \label{subsec:two_wholes}

In this subsection, we provide another sufficient condition for $G\to P_n$ which can be viewed as a slight straightening of Lemma~\ref{lem:letzter}. We start with the following elementary observation that is similar to the one in~\cite{DP15} and~\cite{P14}.

\begin{lemma}\label{lem:obs}
Let $G$ be a graph of order $cn$ for some $c>1$. Then, the vertex set $V(G)$ can be partitioned into three sets $P, U,W$, $|U|=|W|=(cn-|P|)/2$ such that the graph induced by $P$ has a Hamiltonian path and $e(U, W) = 0$.
\end{lemma}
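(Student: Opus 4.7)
The plan is a short extremal argument. Consider the family $\mathcal{F}$ of partitions $(P, U, W)$ of $V(G)$ for which $G[P]$ admits a Hamiltonian path and $|U| = |W| = (cn - |P|)/2$ (permitting $|U|$ and $|W|$ to differ by one if $cn - |P|$ is odd; this does not affect the asymptotic applications). The family $\mathcal{F}$ is non-empty --- for instance, taking $P$ to be the vertex set of a longest path in $G$ and splitting the remainder arbitrarily always produces an element of $\mathcal{F}$. Within $\mathcal{F}$ pick a partition that minimizes $e(U, W)$ and, subject to that, maximizes $|P|$. I claim that $e(U, W) = 0$ for this choice.

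Suppose toward a contradiction that there is an edge $uw \in E(G)$ with $u \in U$ and $w \in W$. The key move is to absorb both $u$ and $w$ into $P$, using the edge $uw$ as a bridge onto an endpoint of the current Hamiltonian path of $G[P]$: the new partition $(P \cup \{u, w\},\, U \setminus \{u\},\, W \setminus \{w\})$ preserves the balance $|U'| = |W'|$, lies in $\mathcal{F}$, and satisfies $e(U', W') \le e(U, W) - 1$, contradicting minimality. When $P = \emptyset$, the single edge $uw$ is itself a Hamiltonian path of $G[\{u, w\}]$. When $P \ne \emptyset$ has Hamiltonian path $p_1 p_2 \cdots p_m$ and, say, $u p_1 \in E(G)$, the path $w, u, p_1, p_2, \ldots, p_m$ is a Hamiltonian path of $G[P \cup \{u, w\}]$; the remaining three endpoint-adjacency cases are symmetric. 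A closely related move works if $u$ and $w$ are both adjacent to \emph{consecutive} interior vertices of the path (say $u p_i, w p_{i+1} \in E(G)$), via the insertion $p_1 \cdots p_i u w p_{i+1} \cdots p_m$.

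The main obstacle is the remaining subcase: $P \ne \emptyset$ and neither $u$ nor $w$ is adjacent to an endpoint of the current Hamiltonian path, nor can either be inserted using a consecutive-interior adjacency. Here I expect to invoke a P\'osa-style rotation-extension argument --- using chord edges of $G[P]$ to generate alternate Hamiltonian paths of $G[P]$ with different endpoints, and iterating until some exposed endpoint becomes adjacent to $u$ or $w$ in $G$, at which point the argument of the previous paragraph applies. If this rotation closure still fails to produce a useful endpoint, then the neighborhoods of $u$ and $w$ in $G$ are severely constrained, and the secondary maximality of $|P|$ should allow a direct vertex-exchange between $P$ and $U \cup W$ that either lowers $e(U, W)$ further or keeps it fixed while strictly increasing $|P|$, again contradicting the extremal choice. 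Executing this rotation-exchange step cleanly is the technical heart of the proof.
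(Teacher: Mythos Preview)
Your extremal setup is plausible, but the argument is incomplete exactly where you say it is, and the gap is real. All of your concrete improvement moves---absorbing $u,w$ onto an endpoint, P\'osa rotation, consecutive-interior insertion---require at least one edge from $\{u,w\}$ into $P$. If the crossing edge $uw$ lies in a component of $G$ disjoint from $P$ (which nothing in your hypotheses excludes), every one of these moves is unavailable. Your fallback ``direct vertex-exchange between $P$ and $U\cup W$'' is left entirely unspecified; it is not clear which vertices to swap so as to keep a Hamiltonian path in $G[P']$, keep $|U'|=|W'|$, and not increase $e(U',W')$, nor how the secondary maximality of $|P|$ helps. For instance, if $G$ is the disjoint union of a $K_4$ and an isolated edge $uw$, then from the partition $P=V(K_4)$, $U=\{u\}$, $W=\{w\}$ one must eject two $K_4$-vertices from $P$ \emph{and} move one of $u,w$ across to reach $e(U,W)=0$; your argument gives no rule that discovers such a compound rearrangement. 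More structurally, every move you describe is balance-preserving and hence pairs a $U$-change with a $W$-change, and there is no evident reason a useful paired move must exist whenever $e(U,W)>0$.

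The paper's proof is completely different and sidesteps this difficulty. Instead of improving toward $e(U,W)=0$, it maintains $e(U,W)=0$ as an invariant throughout a DFS-style process. Start with $P=(v_1)$, $U=V(G)\setminus\{v_1\}$, $W=\emptyset$; greedily extend the path from its current endpoint into $U$; whenever the endpoint has no neighbour in $U$, pop it into $W$ and back up to the previous vertex (restarting from a fresh $U$-vertex if $P$ empties). A vertex enters $W$ only at a moment when it has no neighbour in the current $U$, and since $U$ only shrinks thereafter, $e(U,W)=0$ holds at every step. Each step either decrements $|U|$ or increments $|W|$, so $|U|-|W|$ drops by exactly one per step from its initial value $|V(G)|-1$; at the step where it hits zero one stops. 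No rotations, no extremal choice, and---crucially---no need for balance-preserving paired moves, because balance is not enforced along the way but merely detected at the end.
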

\begin{proof}
We perform the following algorithm on $G$ and construct a path $P$. Let $v_1$ be an arbitrary vertex of $G$, let $P=(v_1)$, $U = V(G) \setminus \{v_1\}$, and $W = \emptyset$. If there is an edge from $v_1$ to $U$ (say from $v_1$ to $v_2$), we extend the path as $P=(v_1,v_2)$ and remove $v_2$ from $U$. We continue extending the path $P$ this way for as long as possible. It might happen that we reach the point of the process in which $P$ cannot be extended, that is, there is a path from $v_1$ to $v_k$ (for some $k \le cn$) and there is no edge from $v_k$ to $U$. If this is the case, $v_k$ is moved to $W$ and we try to continue extending the path from $v_{k-1}$, perhaps reaching another critical point in which another vertex will be moved to $W$, etc. If $P$ is reduced to a single vertex $v_1$ and no edge to $U$ is found, we move $v_1$ to $W$ and simply re-start the process from another vertex from $U$, again arbitrarily chosen. 

An obvious but important observation is that during this algorithm there is never an edge between $U$ and $W$. Moreover, in each step of the process, the size of $U$ decreases by 1 or the size of $W$ increases by 1. Hence, at some point of the process both $U$ and $W$ must have equal size, namely, $|U|=|W|=(cn-|P|)/2$. We stop the process and $P, U, W$ form the desired partition of $V(G)$.
\end{proof}

Now we are ready to state the main tool used in this subsection.

\begin{lemma}\label{lem:2_colours}
Let $G$ be a graph of order $cn$ for some $c>2$. Assume that for every four disjoint sets of vertices $S_1,S_2,T_1,T_2$ such that $|S_1|+|S_2| = |T_1|+|T_2| =|S_1|+|T_1| = |S_2|+|T_2| = n(c-2)/2$ we have $e(S_1,T_2) \neq 0$ or $e(S_2,T_1) \neq 0$. Then, $G\to P_n$.
(Clearly, this implies that $|S_1|=|T_2|$ and $|S_2|=|T_1|$.)
\end{lemma}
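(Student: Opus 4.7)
The plan is to argue by contradiction. Suppose $G$ admits a 2-edge-colouring (in red and blue) with no monochromatic $P_n$; I will then exhibit four disjoint sets $S_1, S_2, T_1, T_2$ of the prescribed sizes with $e(S_1, T_2) = e(S_2, T_1) = 0$, contradicting the hypothesis.

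First, I apply Lemma~\ref{lem:obs} to the red subgraph $G_R$, obtaining a partition $V(G) = P_R \cup U_R \cup W_R$ with $|U_R| = |W_R| = (cn - |P_R|)/2$, no red edge between $U_R$ and $W_R$, and a Hamiltonian red path in $P_R$; the last property forces $|P_R| < n$ since there is no red $P_n$. Let $B$ be the bipartite graph on $U_R \cup W_R$ whose edges are the blue edges of $G$ joining $U_R$ to $W_R$. Applying Lemma~\ref{lem:obs} to $B$ yields $U_R \cup W_R = P_B \cup U_B \cup W_B$ with $|U_B| = |W_B| = (cn - |P_R| - |P_B|)/2$, a Hamiltonian $B$-path (which is in particular a blue path of $G$) inside $P_B$, and no $B$-edge between $U_B$ and $W_B$; hence $|P_B| < n$. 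Now form the four cells
\[
A_{11} = U_R \cap U_B, \quad A_{12} = U_R \cap W_B, \quad A_{21} = W_R \cap U_B, \quad A_{22} = W_R \cap W_B.
\]
For any subsets $X \subseteq A_{11}$ and $Y \subseteq A_{22}$, or $X \subseteq A_{12}$ and $Y \subseteq A_{21}$, there is no edge between $X$ and $Y$ in $G$: no red edge (opposite sides of the red partition) and no blue edge (opposite sides of the blue bipartite partition, since those are precisely the $B$-edges forbidden between $U_B$ and $W_B$). It therefore suffices to pick $S_1 \subseteq A_{11}$, $T_2 \subseteq A_{22}$ with $|S_1| = |T_2| = x$ and $S_2 \subseteq A_{21}$, $T_1 \subseteq A_{12}$ with $|S_2| = |T_1| = y$, where $x + y = (c-2)n/2$.

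This selection is feasible provided $\min(|A_{11}|, |A_{22}|) + \min(|A_{12}|, |A_{21}|) \ge (c-2)n/2$. Using $|U_R| = |W_R|$, $|U_B| = |W_B|$, and $|U_R \cap P_B| + |W_R \cap P_B| = |P_B|$, a short computation gives the identity $|A_{11}| - |A_{22}| = |A_{12}| - |A_{21}|$ with common absolute value at most $|P_B|/2$, while $\sum_{i,j} |A_{ij}| = cn - |P_R| - |P_B|$. Substituting yields
\[
\min(|A_{11}|, |A_{22}|) + \min(|A_{12}|, |A_{21}|) \;\ge\; \frac{cn - |P_R|}{2} - |P_B|,
\]
which settles the lemma whenever $|P_R| + 2|P_B| \le 2n$.

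I expect the main obstacle to be the extreme regime in which both $|P_R|$ and $|P_B|$ are close to $n$. To handle this, I would exploit the flexibility built into Lemma~\ref{lem:obs} (choice of starting vertex, together with the freedom to swap the roles of $U$ and $W$) combined with a P\'osa-type rotation at an endpoint of the red Hamiltonian path inside $P_R$: such a rotation either extends $P_R$ to a red $P_n$ (an immediate contradiction) or forces enough structure on the bipartite blue graph $B$ that a second run of Lemma~\ref{lem:obs} on $B$ can be arranged to produce a strictly smaller $|P_B|$, returning us to the favourable regime. Making this trade-off quantitatively valid for every $c > 2$ is the technical heart of the argument.
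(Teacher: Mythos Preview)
Your approach is exactly the paper's: apply Lemma~\ref{lem:obs} to one colour class, then to the bipartite graph in the other colour between the two resulting parts, and read off the four cells. The ``obstacle'' you anticipate is not real, and the Pósa-rotation workaround is unnecessary.

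The point you missed is that $B$ is a \emph{bipartite} graph with parts $U_R$ and $W_R$, so the Hamiltonian path $P_B$ produced by Lemma~\ref{lem:obs} alternates between $U_R$ and $W_R$. Hence $\big||U_R\cap P_B|-|W_R\cap P_B|\big|\le 1$. Combining this with $|U_R|=|W_R|$ gives
\[
2\bigl(|A_{11}|-|A_{22}|\bigr)=\bigl(|A_{11}|+|A_{12}|\bigr)-\bigl(|A_{21}|+|A_{22}|\bigr)=|W_R\cap P_B|-|U_R\cap P_B|,
\]
so your common difference $\delta$ satisfies $|\delta|\le 1/2$, not merely $|\delta|\le |P_B|/2$. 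Plugging this into your own identity yields
\[
\min(|A_{11}|,|A_{22}|)+\min(|A_{12}|,|A_{21}|)=\frac{cn-|P_R|-|P_B|}{2}-|\delta|\ge \frac{cn-2(n-1)-1}{2}>\frac{(c-2)n}{2},
\]
since $|P_R|,|P_B|\le n-1$. Thus the selection is always feasible and the proof is complete as stated; no extreme regime needs separate treatment. (This is precisely the observation the paper phrases as ``as $G_r$ is bipartite, the path $P'$ has at most $n/2$ vertices in $U$ and at most $n/2$ vertices in $W$''.)
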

\begin{proof}
Suppose that $G \not\to P_n$; that is, suppose that it is possible to colour the edges of $G$ with the colours blue and red such that there is no monochromatic~$P_n$. Let $G_b$ be the graph on the vertex set $V(G)$, induced by blue edges. It follows from Lemma~\ref{lem:obs} (applied to $G_b$) that there exist two disjoint sets $U,W \subseteq V(G_b) = V(G)$ each of size $n(c-1)/2$ such that there is no blue edge between $U$ and $W$ (observe that $|P| < n$ as there is no blue $P_n$ in $G$). Now, consider a bipartite graph $G_r=(U \cup W, E_r)$, with partite sets $U,W$, and $E_r  = \{ uw \in E(G) : u \in U, w \in W \}$.
Clearly, all edges of $G_r$ are red. Lemma~\ref{lem:obs} (this time applied to $G_r$) implies then that there exist two disjoint sets $U',W' \subseteq V(G_r) \subseteq V(G)$ each of size $n(c-2)/2$ such that there is no red edge between $U'$ and $W'$ (again, observe that $|P'| < n$ as there is no red $P_n$ in $G \supseteq G_r$). Moreover, as $G_r$ is bipartite, the path $P'$ has at most $n/2$ vertices in $U$ and at most $n/2$ vertices in $W$. Hence, we may assume that $|(U' \cup W') \cap U| = |(U' \cup W') \cap W| = n(c-2)/2$. Let $S_1 = U \cap U'$, $S_2 = U \cap W'$, $T_1 = W \cap U'$, and $T_2 = W \cap W'$. Clearly, $|S_1|+|S_2| = |T_1|+|T_2| =|S_1|+|T_1| = |S_2|+|T_2| = n(c-2)/2$, $e(S_1,T_2) = 0$, and $e(S_2,T_1) = 0$. The proof of the theorem is finished.
\end{proof}

First, we will check how the new lemma performs for binomial random graphs.
\begin{theorem}
Let $c = 5.28$ and $d = 6$. Then, a.a.s.\ $\G(cn,d/n) \to P_n$, which implies that $\ram{P_n} < 83.7 n$ for sufficiently large $n$.
\end{theorem}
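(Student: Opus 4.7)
The plan is to apply Lemma~\ref{lem:2_colours} together with a first moment calculation on $\G(cn, d/n)$, mirroring the template used for random regular graphs in Theorem~\ref{thm:one_whole_d-reg} but now applied to two simultaneously bipartite non-edge events.

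By the size identities in Lemma~\ref{lem:2_colours}, an ordered quadruple of disjoint sets $(S_1, S_2, T_1, T_2)$ eligible to violate the conclusion must have $|S_1|=|T_2|=sn$ and $|S_2|=|T_1|=tn$ for some $s,t \ge 0$ with $s+t = (c-2)/2$; the remaining $2n$ vertices lie outside. Since $S_1 \times T_2$ and $S_2 \times T_1$ are disjoint edge sets, the events $\{e(S_1,T_2)=0\}$ and $\{e(S_2,T_1)=0\}$ are independent in $\G(cn,d/n)$. Hence the expected number $Y(s)$ of bad ordered quadruples with these prescribed sizes is
\[
Y(s) \;=\; \binom{cn}{sn,\, tn,\, tn,\, sn,\, 2n}\, (1 - d/n)^{(s^2+t^2)\, n^2},
\]
and Stirling's formula (exactly as in the proof of Theorem~\ref{thm:one_whole_d-reg}) gives
\[
Y(s) \;=\; \Theta(n^{-2})\, e^{h(s)\, n}, \qquad h(s) := c \log c - 2s\log s - 2t\log t - 2\log 2 - d(s^2+t^2),
\]
with $t = (c-2)/2 - s$ and the convention $0\log 0 = 0$.

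The next step is to show that $\max_s h(s) < 0$. Direct differentiation yields $h'(s) = 2\log(t/s) + 2d(t-s)$ and $h''(s) = -2/s - 2/t - 4d < 0$, so $h$ is strictly concave on $(0,(c-2)/2)$; combined with the symmetry $h(s) = h((c-2)/2 - s)$, the global maximum lies at the midpoint $s = t = (c-2)/4$. Plugging in $c = 5.28$ and $d = 6$, a numerical evaluation gives $h((c-2)/4) \approx -0.018 < 0$. Since there are only $O(n)$ admissible integer values of $sn$ and the polynomial prefactor is $\Theta(n^{-2})$, summing yields $\sum_s Y(s) = O(n^{-1})\, e^{-\Omega(n)} = o(1)$. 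Markov's inequality then shows that a.a.s.\ no bad quadruple exists in $\G(cn, d/n)$, and Lemma~\ref{lem:2_colours} gives $\G(cn, d/n) \to P_n$ a.a.s.

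Finally, Chernoff's bound shows that a.a.s.\ $|E(\G(cn,d/n))| \le (1+o(1))\binom{cn}{2}\cdot d/n \sim c^2 d n/2$, which for $c = 5.28$, $d=6$ is about $83.64\, n$, well below $83.7\, n$ for $n$ large; this establishes $\ram{P_n} < 83.7\, n$. The main obstacle I anticipate is the numerical tightness: the maximum of $h$ is only barely negative, so the argument is sensitive to the choice of $(c,d)$, and these constants were clearly chosen to almost minimize $c^2 d/2$ under the constraint $\max_s h(s) \le 0$. Any slack in the Stirling step or the independence between the two non-edge events would destroy the bound, so these need to be spelled out carefully.
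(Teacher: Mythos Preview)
Your proof is correct and follows the paper's approach: a first-moment bound on bad quadruples via Lemma~\ref{lem:2_colours}, with the rate function maximized at $s=t=(c-2)/4$ and yielding the same exponent $f(c,d)=c\log c-(c-2)\log((c-2)/4)-2\log 2-d(c-2)^2/8$. The only cosmetic difference is that the paper locates this maximum by bounding $s^2+t^2\ge (s+t)^2/2$ in the probability factor and then collapsing the sum over $s$ exactly via Vandermonde's identity $\sum_s\binom{m}{s}\binom{m}{m-s}=\binom{2m}{m}$, rather than applying Stirling termwise and invoking concavity and symmetry.
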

\begin{proof}
Consider $\G(cn,d/n)$. Let $X$ be the number of (ordered) quadruples of disjoint sets $S_1,S_2,T_1,T_2$ such that $|S_1|+|S_2| = |T_1|+|T_2| =|S_1|+|T_1| = |S_2|+|T_2| = n(c-2)/2$ and $e(S_1,T_2) = e(S_2,T_1)= 0$.
Then,
\[
\E(X) = \binom{cn}{\frac{c-2}{2}n} \binom{cn-\frac{c-2}{2}n}{\frac{c-2}{2}n}\sum_{s=0}^{\frac{c-2}{2}n} \binom{\frac{c-2}{2}n}{s}\binom{\frac{c-2}{2}n}{\frac{c-2}{2}n-s} \left( 1 - \frac{d}{n}\right)^{s^2 + \left(\frac{c-2}{2}n-s\right)^2}.
\]
Since $s^2 + (\frac{c-2}{2}n-s)^2 \ge 2\left(\frac{c-2}{4}n\right)^2$ and $\sum_{s=0}^{m} \binom{m}{s}\binom{m}{m-s} = \binom{2m}{m}$, we get
\begin{align*}
\E(X) &\le \binom{cn}{\frac{c-2}{2}n} \binom{cn-\frac{c-2}{2}n}{\frac{c-2}{2}n} \binom{(c-2)n}{\frac{c-2}{2}n}  \left( 1 - \frac{d}{n}\right)^{2\left(\frac{c-2}{4}n\right)^2}\\
&= \frac{(cn)!\cdot ((c-2)n)!}{ \left(\left(\frac{c-2}{2}n\right)!\right)^4 \cdot (2n)!} \left( 1 - \frac{d}{n}\right)^{2\left(\frac{c-2}{4}n\right)^2}\\
&\le \left( \frac{c^c (c-2)^{c-2}}{4\left(\frac{c-2}{2}\right)^{2(c-2)}} \right)^n e^{-\frac{d(c-2)^2}{8}n} = e^{f(c,d)n},
\end{align*}
where
\[
f(c,d):= c\log c + (c-2)\log(c-2) - 2\log 2 - 2(c-2)\log((c-2)/2) - d(c-2)^2/8.
\]
Observe that for $c=5.28$ and $d=6$, $f(c,d)<0$ and so the first part follows by the first moment principle. Finally, it follows immediately from Chernoff's bound that the number of edges is well concentrated around $c^2 d n / 2$. As $c^2 d/2 < 83.7$, we get that $\ram{P_n} < 83.7 n$ for $n$ large enough.
\end{proof}

As expected, random $d$-regular graphs give slightly better constant.

\begin{theorem}\label{thm:two_wholes_d-reg}
Let $c = 5.4806$ and $d = 27$. Then, a.a.s.\ $\G_{cn,d} \to P_n$, which implies that $\ram{P_n} < 74 n$ for sufficiently large $n$.
\end{theorem}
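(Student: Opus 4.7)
The plan is to follow the template of Theorem~\ref{thm:one_whole_d-reg}, but with Lemma~\ref{lem:letzter} replaced by the stronger Lemma~\ref{lem:2_colours}. It suffices to show that a.a.s.\ $\G_{cn,d}$ satisfies the hypothesis of Lemma~\ref{lem:2_colours}: for every quadruple of pairwise disjoint sets $(S_1, S_2, T_1, T_2)$ with $|S_1|+|S_2|=|T_1|+|T_2|=|S_1|+|T_1|=|S_2|+|T_2|=(c-2)n/2$ (so $|S_1|=|T_2|=:sn$ and $|S_2|=|T_1|=:mn$ with $s+m=(c-2)/2$), we have $e(S_1,T_2) \ne 0$ or $e(S_2,T_1) \ne 0$. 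I would bound by the first moment the number of \emph{bad} quadruples (those for which both bipartite edge sets vanish), and conclude via the first moment principle exactly as in Theorem~\ref{thm:one_whole_d-reg}.

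Working in the pairing model, I would partition the $cdn$ points into five groups corresponding to the buckets assigned to $S_1, T_2, S_2, T_1$, and $R := V \setminus (S_1 \cup S_2 \cup T_1 \cup T_2)$, of respective sizes $dsn, dsn, dmn, dmn, 2dn$. Let $y_{ij}$ (for $1 \le i \le j \le 5$) count the number of pairs between groups $i$ and $j$ (and, for $i=j$, the number of internal pairs). The forbidden conditions force $y_{12}=y_{34}=0$, and the five degree identities $\sum_{j \ne i} y_{ij} + 2 y_{ii} = (\text{size of group }i) \cdot d$ reduce the remaining variables to eight free parameters. Combined with the size parameter $s$, this leaves a nine-dimensional family of configurations over which the expectation decomposes.

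For each admissible vector of parameters, a Stirling-type computation---identical in spirit to the one in the proof of Theorem~\ref{thm:one_whole_d-reg}, but now with many more factorials---expresses the corresponding contribution to $\E[X]$ as $\Theta(n^{-C}) \exp(f \cdot n)$ for some constant $C$, where $f = f(s, \{y_{ij}/n\}, c, d)$ is an explicit sum of $x \log x$ terms coming from Stirling's approximation of each factorial. Summing over the $O(n^9)$ admissible parameter choices, the task reduces to verifying that
\[
g(c,d) := \max f(s, \{y_{ij}/n\}, c, d) < 0,
\]
where the maximum is taken over the compact feasible region; strict negativity then absorbs the polynomial loss and yields $\E[X] = o(1)$.

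The main obstacle is carrying out this multidimensional maximization, which cannot be handled analytically as it was in Theorem~\ref{thm:one_whole_d-reg}. I would (i) use Karush--Kuhn--Tucker conditions to reduce interior critical points to an algebraic system that can be solved numerically; (ii) check the boundary cases in which one or more of the $y_{ij}$ vanish (e.g.\ $s = 0$ reduces to the setting of Lemma~\ref{lem:letzter}, essentially the case analyzed in Theorem~\ref{thm:one_whole_d-reg}); and (iii) numerically verify that, at $c = 5.4806$ and $d = 27$, the maximum of $f$ is strictly negative. The constants $c$ and $d$ are presumably chosen to minimize $cd/2$ subject to $g(c,d)\le 0$. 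Finally, the number of edges in $\G_{cn,d}$ is $cdn/2 = 73.988\ldots\, n < 74n$, so combining with Lemma~\ref{lem:2_colours} yields $\ram{P_n} < 74n$ for $n$ sufficiently large.
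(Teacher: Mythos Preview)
Your approach is correct and matches the paper's strategy: first moment in the pairing model, using Lemma~\ref{lem:2_colours} instead of Lemma~\ref{lem:letzter}, Stirling to extract an exponent, then maximize. The difference is that you over-parametrize. The paper does not track all fifteen $y_{ij}$; it records only $s$, $a := e(S_1,T_1)/(dn)$, $b := e(S_2,T_2)/(dn)$, and $t := e(S_1\cup S_2, R)/(dn)$. All remaining points of $S_1\cup S_2$ are matched internally as one block, and all remaining points of $T_1\cup T_2\cup R$ as another block; since the forbidden conditions $e(S_1,T_2)=e(S_2,T_1)=0$ place no further constraint on those two blocks, the finer counts you keep are summed out (Vandermonde-type identities) and disappear from the formula. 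This collapses the problem to four parameters rather than nine. The paper then argues that the maximum occurs at the symmetric point $a=b$, $s=(c-2)/4$, reducing to a two-variable exponent $f(a,t)$; the critical equation in $t$ is quadratic and solved in closed form, leaving a one-variable function $g(a)$ to be checked numerically at $c=5.4806$, $d=27$. So the optimization you flagged as the main obstacle is, with this coarser parametrization, essentially as tractable as in Theorem~\ref{thm:one_whole_d-reg}, and no nine-dimensional KKT analysis is needed.
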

\begin{proof}
Since the proof technique is exactly the same as the proof of Theorem~\ref{thm:one_whole_d-reg}, we only provide a sketch of the proof here. Consider $\mathcal{G}_{cn,d}$ for some $c \in (2, \infty)$ and $d \in \N$. Let $s=s(n)$, $a=a(n)$, $b=b(n)$, $t=t(n)$ be any integer-valued functions of $n$ such that $0\le s \le (c-2)/4$, $0 \le a \le s$, $0 \le b \le s$, $0 \le t \le \min\{(c-2)/2-a-b, 2\}$. Let $X(s,a,b,t)$ be the expected number of (ordered) quadruples of disjoint sets $S_1,S_2,T_1,T_2$ such that $|S_1|=|T_2|=sn$, $|S_2|=|T_1|=((c-2)/2-s)n$, $e(S_1,T_2) = e(S_2,T_1)= 0$, $e(S_1,T_1) = adn$, $e(S_2,T_2) = bdn$,  and $e(S_1 \cup S_2,V \setminus (S_1 \cup S_2 \cup T_1 \cup T_2)) = tdn$.  (Note that, in particular, $|S_1|+|S_2| = |T_1|+|T_2| =|S_1|+|T_1| = |S_2|+|T_2| = n(c-2)/2$.)

Using the paring model, we get that 
\begin{eqnarray*}
X(s,a,b,t) &=& {cn \choose sn} {(c-s)n \choose (\frac {c-2}{2} -s) n} { \frac {c+2}{2} n  \choose sn } { (\frac {c+2}{2} - s)n \choose (\frac {c-2}{2} -s) n} { sdn \choose adn } { (\frac {c-2}{2} - s) dn \choose adn} (adn)! \\
&& \quad \cdot { sdn \choose bdn } { (\frac {c-2}{2} - s) dn \choose bdn} (bdn)! { ( \frac{c-2}{2}-a-b)dn \choose tdn } { 2dn \choose tdn } (tdn)! \\
&& \quad \cdot M \left( \left(\frac {c-2}{2} - a - b - t \right) dn \right) M \left( \left(\frac {c+2}{2} - a - b - t \right) dn \right) / M(cdn).
\end{eqnarray*}
Our goal is to show that $X(s,a,b,t) = o(n^{-4})$ (regardless of the choice of $s,a,b,t$) so that $\sum_{s,a,b,t} X(s,a,b,t) = o(1)$. Hence, we need to maximize $X(s,a,b,t)$. One can show that the maximum is obtained for $a=b$ and for the case when $|S_1|=|S_2|=|T_1|=|T_2|=s = (c-2)/4$. Therefore, we need to concentrate on 
$$
Y(a,t) = X \left ( \frac {c-2}{4}, a, a, t \right) = e^{f(a,t)n + o(n)},
$$
where
\begin{eqnarray*}
f(a,t) &=& c \log c + 4 (d-1) \left( \frac c4 - \frac 12 \right) \log \left( \frac c4 - \frac 12 \right) + (d-1) 2 \log 2 - 2 d a \log a - d t \log t \\
&& \quad - \frac {d}{2} c \log c - 4 d \left( \frac c4 - \frac 12 - a \right) \log \left( \frac c4 - \frac 12 - a \right) - d (2-t) \log (2-t) \\
&& \quad + d \left( \frac c2 - 1 - 2a \right) \log \left( \frac c2 - 1 - 2a \right) - \frac d2 \left( \frac c2 - 1 - 2a - t \right) \log \left( \frac c2 - 1 - 2a - t\right) \\
&& \quad + \frac d2 \left( \frac c2 + 1 - 2a - t \right) \log \left( \frac c2 + 1 - 2a - t\right).
\end{eqnarray*}
Since $\frac{\partial f}{\partial t} = 0$ if and only if $t^2 - (c-4a)t + (c-2-4a) = 0$, function $f(a,t)$ has a local maximum for $t = t_0 := (c-4a)/2 - \sqrt{ (c-4a)^2 - 4(c-2-4a) }/2$, which is also a global one on the interval under consideration. We get 
$$
f(a,t) \le g(a) := f(a, t_0).
$$ 
Finally, by taking $c = 5.4806$ and $d = 27$, we get $g(a) < -0.0001$ for any $a$ we deal with. It follows that for any choice of parameters,  $X(s,a,b,t) \le Y(a,t) \le \exp(-0.0001n) = o(n^{-4})$, and the proof is finished. It follows that $\ram{P_n} < 74 n$ for $n$ large enough, as $cd/2 = 73.9881 < 74$.
\end{proof}

\subsection{More colours}\label{subsec:multi}

In this subsection, we turn our attention to more than two colours. Here is a natural generalization of Lemma~\ref{lem:2_colours} in easier, bipartite, setting.
\begin{lemma}\label{thm:multi_size_ram_Pn}
Let $r \in \N \setminus \{1\}$ and $G=(V_1\cup V_2, E)$ be a balanced bipartite graph of order $cn$ for some $c > 2^r-1$. Assume that for every two sets $S\subseteq V_1$ and $T\subseteq V_2$, $|S| = |T| = \left( (c + 1)/2^r-1 \right)n/2$, we have $e(S,T) \neq 0$. Then, $G\to(P_n)_r$.
\end{lemma}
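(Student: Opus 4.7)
The plan is to argue by contradiction. Assume that there is an $r$-edge colouring of $E(G)$ with no monochromatic $P_n$; I will build a nested sequence of balanced bipartite subgraphs $(V_1^{(i)}, V_2^{(i)})$ for $i = 0, 1, \ldots, r$, with $V_j^{(0)} = V_j$, that strip off one colour at a time. The invariants to maintain are: (a) $|V_1^{(i)}| = |V_2^{(i)}| = m_i$, where $m_{i+1} = m_i/2 - n/4$; and (b) there are no colour-$j$ edges between $V_1^{(i)}$ and $V_2^{(i)}$ for any $j \le i$. Starting from $m_0 = cn/2$, an easy induction gives the closed form $m_r = ((c+1)/2^r - 1)n/2$, which matches the size appearing in the statement. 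Thus after $r$ iterations any choice of $S \subseteq V_1^{(r)}$, $T \subseteq V_2^{(r)}$ with $|S| = |T| = m_r$ has $e(S,T) = 0$, contradicting the lemma's hypothesis.

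The inductive step is the heart of the argument. Given $(V_1^{(i)}, V_2^{(i)})$, apply Lemma~\ref{lem:obs} to the bipartite graph $H_{i+1}$ consisting of the colour-$(i+1)$ edges of $G$ restricted to $V_1^{(i)} \cup V_2^{(i)}$. This produces a path $P$ together with disjoint equal-sized sets $U, W$ having $e_{H_{i+1}}(U, W) = 0$. Since $H_{i+1}$ is bipartite, $P$ alternates between the two sides; since the overall colouring has no monochromatic $P_n$, $|P| < n$, and hence $|P \cap V_j^{(i)}| \le n/2$ for $j \in \{1,2\}$. The main obstacle is that $U$ and $W$ need not be balanced with respect to the bipartition $V_1^{(i)} \cup V_2^{(i)}$, so neither alone directly supplies the next balanced bipartite subgraph. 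Writing $A_j = U \cap V_j^{(i)}$ and $B_j = W \cap V_j^{(i)}$, the condition $e_{H_{i+1}}(U, W) = 0$ translates (via bipartiteness) into two separate non-edge conditions: no colour-$(i+1)$ edges between $A_1$ and $B_2$, and none between $B_1$ and $A_2$. So there are two candidate pairs, and the goal becomes to show that at least one of $(A_1, B_2)$, $(B_1, A_2)$ has both sides of size at least $m_{i+1}$.

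This last step uses the identity $|U|=|W|$, which forces $|A_1|+|A_2|=|B_1|+|B_2|$ and hence $d := |A_1| - |B_2| = |B_1| - |A_2|$. If $d \ge 0$, then $\min(|A_1|, |B_2|) = |B_2|$ and $\min(|B_1|, |A_2|) = |A_2|$; since $|A_2| + |B_2| = m_i - |P \cap V_2^{(i)}| \ge m_i - n/2$, the larger of $|A_2|$ and $|B_2|$ is at least $(m_i - n/2)/2 = m_{i+1}$, so one of the two candidate pairs has both sides of size at least $m_{i+1}$. The case $d < 0$ is symmetric, using $|A_1| + |B_1| \ge m_i - n/2$. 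I then crop the chosen pair so that both sides have exactly $m_{i+1}$ vertices to obtain $(V_1^{(i+1)}, V_2^{(i+1)})$; the ``no colour-$j$'' invariants for $j \le i$ are inherited automatically since $V_1^{(i+1)} \subseteq V_1^{(i)}$ and $V_2^{(i+1)} \subseteq V_2^{(i)}$. The hypothesis $c > 2^r - 1$ is used exactly to ensure $2m_i > n$ for every $i \le r - 1$, which is the condition needed to apply Lemma~\ref{lem:obs} at every step.
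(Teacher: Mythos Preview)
Your proof is correct and follows essentially the same approach as the paper: argue by contradiction, and iteratively apply Lemma~\ref{lem:obs} to the colour-$(i+1)$ subgraph to peel off one colour at a time, maintaining a balanced bipartite pair with no edges in the first $i$ colours. Your recursion $m_{i+1}=m_i/2-n/4$ is exactly the paper's $\beta_{i+1}=(\beta_i-1)/2$ in disguise.

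The one place where your argument is slightly more laborious than the paper's is the selection of the next pair. You run a case analysis on the sign of $d=|A_1|-|B_2|$ and then compare $|A_2|$ with $|B_2|$ (or $|A_1|$ with $|B_1|$). The paper instead observes that, because the path $P$ alternates, $|V_1^{(i)}\setminus P|=|V_2^{(i)}\setminus P|$; combining this with $|U|=|W|$ forces $|A_1|=|B_2|$ and $|A_2|=|B_1|$ outright (your $d$ is in fact zero, up to the usual $\pm 1$ rounding). One then simply assumes without loss of generality that $|A_1|\ge |A_2|$ and takes $(A_1,B_2)$, avoiding the case split entirely. Both routes yield the same bound, so this is a cosmetic rather than substantive difference.
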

\begin{proof}
Suppose that $G \not\to (P_n)_r$; that is, suppose that it is possible to colour the edges of $G$ with the colours from the set $\{1, 2, \ldots, r\}$ such that there is no monochromatic~$P_n$. Let $\beta_i$ be defined recursively as follows: $\beta_0 = c$, $\beta_{i} = (\beta_{i-1}-1)/2$ for $i \ge 1$. Note that $\beta_i = (c+1)/2^i - 1$ for $i \ge 0$. We will use (inductively) Lemma~\ref{lem:obs} to show the following claim, which will finish the proof (by taking $S=S_r$ and $T=T_r$).

\smallskip
\emph{Claim}: For each $i \in \{0, 1, \ldots, r\}$, there exist two sets $S_i \subseteq V_1$ and $T_i \subseteq V_2$, each of size at least $\beta_i n/2$, such that there is no edge between $S_i$ and $T_i$ in colour from the set $\{1, 2, \ldots, i\}$. 

\smallskip
The base case ($i=0$) trivially (and vacuously) holds by taking $S_0=V_1$ and $T_0=V_2$. Suppose that the claim holds for some $i$, $0 \le i < r$. We apply Lemma~\ref{lem:obs} to the bipartite graph with partite sets $S_i, T_i$, induced by the edges in colour $(i+1)$. It follows that $S_i \cup T_i$ can be partitioned into three sets $P, U, W$, $P$ has a Hamiltonian path, $|U|=|W|= (\beta_i n - |P|)/2$, and $e(U,W)=0$. Since $G$ is bipartite, $|S_i \setminus P| = |T_i \setminus P| = (\beta_i n - |P|)/2$. Without loss of generality, we may assume that $| (S_i \setminus P) \cap U) | \ge | (T_i \setminus P) \cap U) |$. As a result, $| (S_i \setminus P) \cap U) | = | (T_i \setminus P) \cap W) | \ge n(\beta_i - |P|)/4 \ge n(\beta_i - 1)/4$. The inductive step is finished by taking $S_{i+1} = (S_i \setminus P) \cap U$ and $T_{i+1} = (T_i \setminus P) \cap W$. 
\end{proof}
 
\begin{theorem}\label{thm:more_colours_upper}
Let $r \in \N \setminus \{1\}$, $c = 2^{r+1}$, and $d=8r$. Then, a.a.s.\ $\G(cn,cn,d/n) \to (P_n)_r$, which implies that $\ram{P_n,r} < 33 r 4^r n$ for sufficiently large $n$.
\end{theorem}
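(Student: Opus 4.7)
The plan is to apply Lemma~\ref{thm:multi_size_ram_Pn} to $G = \G(cn,cn,d/n)$. Since $G$ is balanced bipartite on $2cn = 2^{r+2}n$ vertices, the role of the lemma's parameter is played by $\tilde c := 2c = 2^{r+2}$, which satisfies $\tilde c > 2^r - 1$ for every $r \ge 2$. With this choice, the critical set size in the lemma becomes
\[
k := \left(\frac{\tilde c + 1}{2^{r}} - 1\right)\frac{n}{2} = \left(3 + 2^{-r}\right)\frac{n}{2},
\]
so $3n/2 \le k \le 13n/8$ uniformly in $r \ge 2$. The whole argument then boils down to showing that a.a.s.\ every pair $S \subseteq V_1$, $T \subseteq V_2$ of size $k$ satisfies $e(S,T) > 0$.

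The core step is a standard first-moment computation. Let $X$ count the ordered pairs $(S,T)$ with $S \subseteq V_1$, $T \subseteq V_2$, $|S|=|T|=k$, and $e(S,T)=0$. Since the edges of $\G(cn,cn,d/n)$ appear independently,
\[
\E X = \binom{cn}{k}^{2}\left(1-\frac{d}{n}\right)^{k^2}.
\]
Bounding $\binom{cn}{k} \le (ecn/k)^{k}$ and $(1-d/n)^{k^2} \le \exp(-dk^2/n)$, and using $cn/k \le 2^{r+2}/3$ together with $k^2 \ge 9n^2/4$, I would derive
\[
\log \E X \;\le\; 2k \log\!\left(\frac{2^{r+2}e}{3}\right) \;-\; 18 r n.
\]
For $r \ge 2$, the first (positive) term is at most $(13n/4)\left((r+2)\log 2 + 1 - \log 3\right)$, i.e., roughly $2.25 \, r n + O(n)$, which is dominated by the negative $-18 r n$. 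Hence $\E X = o(1)$ and, by the first-moment principle, a.a.s.\ no bad pair exists; Lemma~\ref{thm:multi_size_ram_Pn} then yields $G \to (P_n)_r$ a.a.s.

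It remains to estimate the number of edges. The random variable $|E(G)|$ is binomially distributed with mean $c^2 n^2 \cdot d/n = c^2 d n = 32 \, r \cdot 4^r \, n$, so the Chernoff bound recalled in the preamble of this section gives $|E(G)| < 33 \, r \cdot 4^r \, n$ a.a.s. Combining both a.a.s.\ properties produces, for every sufficiently large $n$, an explicit graph witnessing $\ram{P_n,r} < 33 \, r \cdot 4^r \, n$.

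The only thing to watch is that the inequality between the two terms in the exponent of $\E X$ should hold uniformly in $r \ge 2$. For large $r$ the gap between the coefficient $18$ and the roughly $2.25$ arising from $(13/4)\log 2$ is comfortable, so the only borderline case worth checking by hand is $r = 2$: there $k = 13n/8$ and $d = 16$ give $\log \E X / n \le 8.7 - 42.25 < 0$, safely negative. I do not expect any genuine obstacle, as the structure of the problem (dense bipartite random graphs with expected degree $\Theta(r)$ and threshold size $\Theta(n)$) makes the entropy term only logarithmic in $2^r$ while the negative term is linear in $r$.
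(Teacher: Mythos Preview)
Your argument is correct and follows the same route as the paper: apply Lemma~\ref{thm:multi_size_ram_Pn} and verify its hypothesis by a first-moment computation on bad pairs $(S,T)$, then control $|E(G)|$ by Chernoff. The only cosmetic difference is that the paper does the first-moment bound for pairs of the smaller size $cn/2^{r+2}=n/2$ (which, being below the critical $k=(3+2^{-r})n/2$, is a stronger statement and makes the arithmetic slightly cleaner), whereas you plug in the exact critical size $k$; both choices work and your estimates are sound.
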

\begin{proof}
Consider $\G(cn,cn,d/n) = (V_1 \cup V_2, E)$. We will show that the expected number of pairs of sets $S\subseteq V_1$ and $T\subseteq V_2$ such that $|S| = |T| = cn/2^{r+2}$ and $e(S,T) = 0$ tends to zero as $n \to \infty$. This will finish the first part of the proof by Lemma~\ref{thm:multi_size_ram_Pn}, combined with the first moment principle, as $cn/2^{r+2} < \left( (c + 1)/2^r-1 \right)n/2$ (recall that $c = 2^{r+1}$). Indeed, the expectation we need to estimate is equal to 
\begin{eqnarray*}
{ cn \choose cn/2^{r+2} }^2 \left( 1 - \frac dn \right)^{ (cn/2^{r+2})^2 } &\le& (2^{r+2} e)^{ 2cn/2^{r+2} } \exp \left( - d \left( \frac {c}{2^{r+2}} \right)^2 n \right) \\
&=& o \left(  (e^{2r})^{ 2cn/2^{r+2} } \exp \left( - d \left( \frac {c}{2^{r+2}} \right)^2 n \right) \right) \\
&=& o \left(  \exp \left( \left( 4r - \frac {dc}{2^{r+2}} \right) \frac {cn}{2^{r+2}} \right) \right) = o(1),
\end{eqnarray*}
as $dc/2^{r+2} = 4r$. The second part follows from the fact that the number of edges in $\G(cn,cn,d/n)$ is well concentrated around $c^2 d n$ and $c^2 d = 32 r 4^r < 33 r 4^r$.
\end{proof}

Summarizing, we showed that there exist some positive constants $c_1, c_2$ such that for any $r \in \N$ we have
\[
c_1 r^2 \cdot  n \le \ram{P_n,r} \le c_2 r 4^r \cdot n.
\]
Of course, one can improve Lemma~\ref{thm:multi_size_ram_Pn} slightly. For example, in the first step there is no need to assume that the graph is bipartite. Also one could try to use the ``double wholes'' approach as in Lemma~\ref{lem:2_colours}. However, the improvement would not be substantial. It would be interesting to determine the order of magnitude of $\ram{P_n,r}$ as a function of $r$ (for fixed $n$). 

\section{Multicoloured path Ramsey number of $\G(n,p)$}\label{sec4}

Determining the classical Ramsey number for paths, $R(P_n,r)$, it is a well-known problem that attracted a lot of attention. The case $r=2$ is well understood, due to the result of Gerencs{\'e}r and Gy{\'a}rf{\'a}s~\cite{GG67}. It is known that
\[
R(P_n,2) = \left\lfloor \frac{3n-2}{2} \right\rfloor.
\]
For $r=3$ and $n$ sufficiently large, Gy{\'a}rf{\'a}s, Ruszink{\'o}, S{\'a}rk{\"o}zy, and Szemer{\'e}di~\cite{GRSS07,GRSS07b} proved that
\[
R(P_n,3) = 
\begin{cases}
2n-1 & \text{ for odd } n,\\
2n-2 & \text{ for even } n,
\end{cases}
\]
as conjectured earlier by Faudree and Schelp~\cite{FS75}. (An asymptotic value was obtained earlier by Figaj and \L{}uczak~\cite{FL07}.) However, this problem is still open for \emph{small} values of $n$. On the other hand, very little is known for any integer $r\ge 4$. The well-known Erd\H{o}s and Gallai result~\cite{EG59} (see Theorem~\ref{thm:erdos_gallai} below) implies only that $R(P_n,r) \le rn$. Very recently, S{\'a}rk{\"o}zy~\cite{S15} improved it and showed that for any integer $r\ge 2$,
\[
R(P_n,r) \le \left( r - \frac{r}{16r^3+1} \right)n.
\] 
It is believed that the value of $R(P_n,r)$ is close to $(r-1)n$.

\smallskip

In this section, we consider an analogous problem for $\G(n,p)$ with average degree, $np$, tending to infinity as $n\to\infty$. We are interested in the following constant:
\begin{equation}\label{eq:cr}
c_r = \sup\{ c \in [0,1] : \G(n,p) \to \left(P_{cn}\right)_r \ \text{a.a.s., provided $np \to \infty$} \}.
\end{equation}
The case $r=2$ is already investigated; due to Letzter~\cite{L15} we know that $c_2=2/3$. For any integer $r\ge 3$, Lemma~\ref{thm:multi_size_ram_Pn} gives only $c_r \ge 1/(2^r-1)$. We will show a stronger result. 

\begin{theorem}\label{thm:multi_Gnp}
Let $r\in \N \setminus \{1, 2, 3\}$, $\alpha > 0$ be an arbitrarily small constant, and $p=p(n)$ be such that $pn\to\infty$. Then, a.a.s.\ $\G(n,p) \to \left(P_{(1/r -\alpha) n}\right)_r$, which implies that $c_r \ge 1/r$.
Furthermore, for 3 colours,  a.a.s.\ $\G(n,p) \to \left(P_{(1/2 -\alpha) n}\right)_3$, which is optimal and implies that $c_3 = 1/2$.
\end{theorem}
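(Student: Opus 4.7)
The plan is to adapt Letzter's proof for the two-colour case and combine it with the approach of Figaj and \L{}uczak for three colours. The starting point will be a sparse regularity lemma for the binomial random graph: provided $pn \to \infty$, $\G(n,p)$ a.a.s.\ admits, for every $r$-edge colouring of its edges, an $(\eps, p)$-regular partition of $V(\G(n,p))$ into $k = k(\eps)$ classes $V_1, \dots, V_k$ of nearly equal size such that all but at most $\eps k^2$ pairs $(V_i, V_j)$ are $(\eps, p)$-regular in each colour. I would then form the reduced edge-coloured multigraph $R$ on vertex set $[k]$, putting an edge $ij$ of colour $c$ whenever the pair $(V_i, V_j)$ is $(\eps, p)$-regular with colour-$c$-density at least $(1/r - \eps)p$. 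Since the total edge density across all $r$ colours on a regular pair is $(1 - o(1))p$, at least one such colour exists for each regular pair, and after discarding the small number of irregular pairs, $R$ is essentially an $r$-edge-coloured copy of $K_{(1 - o(1))k}$.

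The second ingredient is a standard blow-up argument: if $R$ contains a monochromatic connected subgraph $H$ in colour $c$ spanning a matching $M$ of size $m$, then $\G(n,p)$ contains a monochromatic path in colour $c$ on $(2 - o(1))\, m \cdot n/k$ vertices. The classical argument, used by Figaj and \L{}uczak, exploits each matching edge of $M$ as a long alternating path of length roughly $2n/k$ through the corresponding regular pair, linking consecutive matching edges by very short monochromatic paths within $H$; the existence of these connecting paths inside $\G(n,p)$ follows from $(\eps, p)$-regularity combined with $pn \to \infty$.

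It therefore suffices to produce, in $R$, a monochromatic connected matching of appropriate size. For $r \geq 4$, some colour class of $R$ carries at least $(1 - o(1))\binom{k}{2}/r$ edges, so by the Erd\H{o}s-Gallai theorem this colour class contains a path on at least $(1/r - o(1))k$ vertices, which trivially yields a connected matching of size $(1/(2r) - o(1))k$; blowing this up then produces a monochromatic $P_{(1/r - \alpha)n}$ in $\G(n,p)$, as required. For $r = 3$ one cannot afford the factor-two loss of the Erd\H{o}s-Gallai step; instead I would invoke the connected-matching result underlying the Figaj-\L{}uczak proof of $R(P_n, 3) = (2 + o(1))n$, namely that every $3$-edge-colouring of $K_k$ contains a monochromatic connected matching of size $(1/4 - o(1))k$. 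Blowing this up yields a monochromatic path of length $(1/2 - \alpha)n$. Optimality for $r = 3$ follows from the familiar construction: split $V(\G(n,p)) = A \cup B$ with $|A| = |B| = n/2$, colour edges inside $A$ with one colour, edges inside $B$ with a second, and edges across $A, B$ with the third; then no monochromatic component, and hence no monochromatic path, has more than $n/2$ vertices.

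The principal obstacle will be making the sparse-regularity and embedding machinery work in the regime where $pn \to \infty$ arbitrarily slowly. One must verify that inside every $(\eps, p)$-regular pair with colour-$c$-density $\Omega(p)$ there really exist long monochromatic paths in $\G(n,p)$, and that suitable short monochromatic connecting paths are available between matching edges inside a colour-$c$ connected component of $R$. This is precisely where the hypothesis $pn \to \infty$ is essential, and it is also the step reusing the most from Letzter's paper and from the Kohayakawa-\L{}uczak-R\"odl sparse-regularity framework. Beyond adapting the reduced-graph combinatorics to $r$ colours, no new probabilistic input is expected to be needed.
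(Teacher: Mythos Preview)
Your proposal is correct for the main claim and follows essentially the paper's route: sparse regularity, a reduced $r$-coloured cluster graph, Erd\H{o}s--Gallai for $r\ge 4$ or the Figaj--\L{}uczak connected-matching lemma for $r=3$, then a blow-up to a long monochromatic path in $\G(n,p)$. One cosmetic difference: for $r\ge 4$ the paper does not pass through connected matchings but blows up the monochromatic path in $R$ directly, splitting each cluster $V_{i_j}$ into halves $U_j,W_j$ and finding a long red path in $G[U_j,W_{j+1}]$ for each consecutive pair; both versions yield $(1/r-\alpha)n$.

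There is, however, a genuine error in your optimality argument for $r=3$. In your construction (edges inside $A$ colour~1, inside $B$ colour~2, all $A$--$B$ edges colour~3), the colour-3 subgraph is the bipartite random graph between $A$ and $B$, which for $pn\to\infty$ is a.a.s.\ connected and in fact contains a Hamiltonian path on all $n$ vertices; so your claim that ``no monochromatic component has more than $n/2$ vertices'' is false for the third colour. The bound $c_3\le 1/2$ does not come from any simple explicit construction of this type; it follows instead from the known Ramsey-number lower bound $R(P_m,3)\ge 2m-2$ (Faudree--Schelp, Gy\'arf\'as--Ruszink\'o--S\'ark\"ozy--Szemer\'edi), which guarantees a $3$-colouring of $K_n$ with no monochromatic $P_{(1/2+o(1))n}$; restricting this colouring to $\G(n,p)\subseteq K_n$ gives the upper bound. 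The paper handles this point in exactly that way, by citation rather than construction.
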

\noindent
Furthermore, we conjecture that $c_r = n/R(P_n,r)$ for any $r \ge 2$, which is true for $r=2$~\cite{L15} and for $r=3$, due to the above theorem.

\smallskip

First we prove Theorem~\ref{thm:multi_Gnp} for $r\ge 4$. Let us start with the Erd\H{o}s and Gallai result~\cite{EG59} and its perturbed version.

\begin{theorem}[\cite{EG59}]\label{thm:erdos_gallai}
Let $G$ be a graph of order $n$ with no $P_k$. Then, $|E(G)| \le n(k-2)/2$.
\end{theorem}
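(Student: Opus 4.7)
The plan is to proceed by induction on $n$. The base case $n \le k-1$ is immediate: $|E(G)| \le \binom{n}{2} = n(n-1)/2 \le n(k-2)/2$. For the inductive step with $n \ge k$, I would first dispatch the disconnected case: if the components of $G$ have orders $n_1, n_2, \ldots, n_s$, each is $P_k$-free, so the inductive hypothesis gives $|E(G)| = \sum_i |E(G_i)| \le \sum_i n_i(k-2)/2 = n(k-2)/2$. This reduces the problem to the connected case.

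For connected $G$ with $n \ge k$, I would take a longest path $P = v_1 v_2 \ldots v_m$ in $G$. Since $G$ is $P_k$-free we have $m \le k-1$, and since $v_1$ is an endpoint of a longest path, every neighbor of $v_1$ must lie on $P$, giving $\deg(v_1) \le m-1 \le k-2$ (and similarly for $v_m$). The naive attempt to delete $v_1$ and apply induction yields only $|E(G)| \le (n-1)(k-2)/2 + (k-2)$, which is too weak by $(k-2)/2$, so a finer argument is needed.

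To bridge this gap, I would use Pósa's rotation technique: for each edge $v_1 v_i$ with $i \ge 2$, the path $v_{i-1} v_{i-2} \ldots v_1 v_i v_{i+1} \ldots v_m$ is another longest path with new endpoint $v_{i-1}$. Iterating the rotation yields a set $T$ of vertices, each of which is an endpoint of some longest path in $G$ and hence has degree at most $k-2$. A careful averaging or peeling argument over $T$---or, equivalently, an analysis of the block decomposition of $G$ showing that every leaf block has at most $k-1$ vertices and contributes edges in the right proportion---then produces the sharp bound $|E(G)| \le n(k-2)/2$, with equality achieved by disjoint copies of $K_{k-1}$.

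The main obstacle will be the $2$-connected case, where block decomposition gives no reduction and only direct rotation arguments apply. The hard part is to show that when the set $T$ of reachable endpoints is small, the neighborhood of $v_1$ in $G$ is so rigidly constrained that $G$ is forced to be close to $K_{k-1}$; in that extremal regime the claimed bound is tight, closing the induction.
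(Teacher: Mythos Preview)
The paper does not prove this theorem at all: it is quoted from Erd\H{o}s and Gallai~\cite{EG59} and used as a black box to derive Corollary~\ref{cor:erdos_gallai}. So there is no ``paper's own proof'' to compare your proposal against.

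That said, your proposal has a genuine gap in the connected inductive step. You correctly observe that deleting an endpoint $v_1$ of a longest path and using only $\deg(v_1)\le k-2$ loses a factor of $(k-2)/2$, and you then invoke P\'osa rotation and an unspecified ``averaging or peeling argument'' to close the gap, explicitly flagging the $2$-connected case as the hard part you have not resolved. But the missing idea is much simpler than rotation or block decomposition: one shows directly that
\[
\deg(v_1)+\deg(v_m)\le m-1\le k-2.
\]
Indeed, all neighbours of $v_1$ and $v_m$ lie on $P$, and if $\deg(v_1)+\deg(v_m)\ge m$ then the standard crossing argument (there exist $j$ with $v_1v_j\in E$ and $v_mv_{j-1}\in E$) produces a cycle on all $m$ vertices of $P$; since $G$ is connected and $n\ge k>m$, some vertex off the cycle attaches to it, yielding a path on $m+1$ vertices and contradicting maximality of $P$. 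Hence one of $v_1,v_m$ has degree at most $(k-2)/2$; delete that vertex and apply induction to get
\[
|E(G)|\le \frac{(n-1)(k-2)}{2}+\frac{k-2}{2}=\frac{n(k-2)}{2}.
\]
This one-line degree-sum bound is exactly the $(k-2)/2$ you were missing, and it makes the rotation machinery and the $2$-connected worry unnecessary.
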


After applying this theorem to the subgraph of $G$ induced by the majority colour, we get the following corollary.

\begin{corollary}\label{cor:erdos_gallai}
Let $r\in \N \setminus \{1, 2\}$ and $0<\varepsilon < 1$. Then, for every graph $G$ of order $n$ with at least $(1-\varepsilon)\binom{n}{2}$ edges we have $G\to (P_k)_r$, where $k=(1 - \varepsilon)n/r$.
\end{corollary}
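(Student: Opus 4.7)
The plan is to combine a simple averaging argument with Theorem~\ref{thm:erdos_gallai}. Given any $r$-edge-colouring of $G$, one of the $r$ colour classes---call it $H$, the ``majority'' colour---must contain at least $|E(G)|/r \ge (1-\varepsilon)\binom{n}{2}/r$ edges by the pigeonhole principle. Since $H$ is a subgraph of $G$ on (at most) $n$ vertices, Theorem~\ref{thm:erdos_gallai} tells us that if $H$ were $P_k$-free then $|E(H)| \le n(k-2)/2$. Thus the entire content of the proof is to verify that, for $k = (1-\varepsilon)n/r$, the lower bound on $|E(H)|$ strictly exceeds this upper bound, forcing $H \supseteq P_k$ and hence a monochromatic $P_k$ in $G$.

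The required strict inequality
\[
\frac{1-\varepsilon}{r}\binom{n}{2} > \frac{n(k-2)}{2}
\]
simplifies, after substituting $k = (1-\varepsilon)n/r$ and dividing through by $n/2$, to $(1-\varepsilon)(n-1)/r > (1-\varepsilon)n/r - 2$, i.e.\ $(1-\varepsilon)/r < 2$. This is immediate for every $r \ge 3$ and every $\varepsilon \in (0,1)$, so no largeness assumption on $n$ is needed beyond the paper's standing convention that we ignore the rounding of $k$ to an integer.

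There is effectively no obstacle here; the proof is a one-line computation once one notices that the ``$-2$'' slack in the Erd\H{o}s--Gallai bound easily absorbs the $O(n)$ discrepancy between $\binom{n}{2}$ and $n^2/2$ divided by $r$. In fact the same argument would yield a slightly longer path (one can replace $(1-\varepsilon)n/r$ by anything up to roughly $(1-\varepsilon)(n-1)/r + 2$); we state the clean form $k = (1-\varepsilon)n/r$ because this is precisely what will be plugged into the sparse random-graph argument leading to Theorem~\ref{thm:multi_Gnp} for $r \ge 4$.
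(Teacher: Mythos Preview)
Your proof is correct and follows exactly the paper's approach: the paper simply says ``After applying this theorem to the subgraph of $G$ induced by the majority colour, we get the following corollary,'' and you have faithfully spelled out that one-line argument together with the routine verification of the required inequality.
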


\smallskip

Now we introduce some notation needed to state Sparse Regularity Lemma.
For given two disjoint subsets of vertices $U$ and $W$ in a graph $G$, we define the \emph{$p$-density} of the edges between $U$ and $W$ as 
\[
d_p(U,W) = \frac{e(U,W)}{p|U||W|}.
\]
Moreover, we say that $U,W$ is an \emph{$(\varepsilon,p)$-regular pair} if, for every $U'\subseteq U$ and  $W'\subseteq W$ with $|U'|\ge \varepsilon |U|$, $|W'|\ge \varepsilon |W|$, $|d_p(U',W')-d_p(U,W)|\le \varepsilon$.
Suppose that $0 <\eta< 1$, $D > 1$ and $0 < p<1$ are given. We will say that a graph $G$ is \emph{$(\eta,p,D)$-upper-uniform} if for all disjoint subsets $U_1$ and $U_2$ with $|U_1| \ge |U_2| \ge \eta |V(G)|$, $d_p(U_1,U_2)\le D$. 

\smallskip

The following theorem, which is a variant of Szemer\'edi's Regularity Lemma~\cite{S78} for sparse graphs, was discovered independently by Kohayakawa~\cite{K97} and R\"odl (see, for example,~\cite{C14}).

\begin{theorem}[\emph{Sparse Regularity Lemma}]\label{SRL}
For every $\varepsilon>0$, $r\ge 1$ and $D\ge 1$, there exist $\eta>0$ and $T$ such that for every $0\le p\le 1$, if $G_1, G_2, \dots,G_r$ are $(\eta,p,D)$-upper-uniform graphs on the vertex set $V$, then there is an equipartition of $V$ into $s$ parts, where $1/\varepsilon \le s\le T$, for which all but at most $\varepsilon \binom{s}{2}$ of the pairs induce an $(\eps,p)$-regular pair in each $G_i$. 
\end{theorem}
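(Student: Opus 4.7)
The plan is to follow the classical density-increment strategy of Szemer\'edi, adapted to sparse graphs by Kohayakawa and R\"odl, and to track all $r$ colours simultaneously by working with a single aggregate index. The central quantity is the $p$-index of a partition $\mathcal{P}=\{V_1,\dots,V_s\}$ of $V$ with respect to a graph $H$,
\[
\mathrm{ind}_p(\mathcal{P},H) = \sum_{1\le j<k\le s} \frac{|V_j|\,|V_k|}{|V|^2}\, d_p(V_j,V_k)^2,
\]
together with its multicoloured aggregate $\Phi(\mathcal{P})=\sum_{i=1}^{r}\mathrm{ind}_p(\mathcal{P},G_i)$.

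First I would establish a uniform upper bound on $\Phi$. In every partition the argument will encounter, each part will be forced to have size at least $\eta|V|$, so the $(\eta,p,D)$-upper-uniformity hypothesis gives $d_p(V_j,V_k)\le D$ for every relevant pair, hence $\Phi(\mathcal{P})\le rD^2$. This plays the role of the trivial bound ``densities are at most one'' in the dense Szemer\'edi proof and is the only place where the upper-uniformity hypothesis is used.

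The second ingredient is the standard Cauchy--Schwarz defect lemma: if $(V_j,V_k)$ is not $(\varepsilon,p)$-regular in some $G_i$ and $(U',W')$ is a witness, then the binary refinement of $V_j$ and $V_k$ induced by $U',W'$ increases the contribution of this pair to $\mathrm{ind}_p(\cdot,G_i)$ by at least $\varepsilon^4|V_j||V_k|/|V|^2$. I would then carry out the usual simultaneous refinement: given an equipartition $\mathcal{P}$ for which more than $\varepsilon\binom{s}{2}$ pairs are irregular in \emph{some} colour, I take the common refinement of all witnessing binary splits across all bad pairs and all $r$ colours. This yields a partition $\mathcal{P}'$ with at most $s\cdot 2^{rs}$ classes and $\Phi(\mathcal{P}')\ge \Phi(\mathcal{P})+\delta$, for some $\delta=\delta(\varepsilon)>0$; collecting a small exceptional set into one class and evenly resplitting the remainder restores an equipartition at the cost of only an $o(\delta)$ loss in $\Phi$.

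Finally I would iterate, starting from any equipartition into $\lceil 1/\varepsilon\rceil$ parts. Because $\Phi$ grows by at least $\delta/2$ each round and is bounded above by $rD^2$, the procedure halts after at most $2rD^2/\delta$ iterations, producing the required bound $T=T(\varepsilon,r,D)$ on the final number of parts. One then picks $\eta=\eta(\varepsilon,r,D)$ small enough that, after all $T$ refinement rounds, every part is still of size at least $\eta|V|$, so the upper-uniformity bound $d_p\le D$ remains valid throughout. The main obstacle is precisely this order-of-quantifier bookkeeping: each refinement blows up the number of parts by an $r$-dependent super-exponential factor, the stopping time $T$ depends on this growth rate and on $rD^2/\delta$, and $\eta$ must be chosen \emph{after} $T$. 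Managing this chain of dependencies — rather than any genuinely new combinatorial input — is what distinguishes the sparse multicoloured version from the original Szemer\'edi proof.
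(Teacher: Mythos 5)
The paper does not prove this theorem: it is quoted as a known result of Kohayakawa and R\"odl (with a pointer to Conlon's survey), so there is no in-paper argument to compare against. Your proposal is, in outline, the standard energy-increment proof of the sparse regularity lemma, and the overall architecture --- $p$-density, aggregate index over the $r$ colours, Cauchy--Schwarz defect, tower-type bound on $s$, and choosing $\eta$ last --- is the right one and does match how the result is proved in the literature.

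There is, however, one step where the sparse setting is genuinely different from the dense one and where your sketch, as written, has a gap. You claim that collecting the leftover vertices into one class and re-splitting restores an equipartition ``at the cost of only an $o(\delta)$ loss in $\Phi$,'' i.e.\ $\Phi(\mathcal{P}'')\ge\Phi(\mathcal{P}')-\delta/2$. In the dense case this is routine because densities are bounded by $1$ no matter how small the parts are. In the sparse case that bound disappears: the upper-uniformity hypothesis controls $d_p$ only for pairs of sets of size at least $\eta|V|$, while the common refinement $\mathcal{P}'$ of all witness splits can produce atoms of size $O(1)$, for which $d_p$ can be as large as $1/p$. Consequently $\Phi(\mathcal{P}')$ need not be bounded by $rD^2$, and the asserted near-monotonicity under re-equalization cannot hold in general (otherwise $\Phi(\mathcal{P}'')\le rD^2$ would be violated). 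Your opening sentence --- ``in every partition the argument will encounter, each part will be forced to have size at least $\eta|V|$'' --- is true of the equipartitions $\mathcal{P}_k$ but not of the intermediate common refinements, which is exactly where the trouble sits. The standard fix (implicit in Kohayakawa's proof and in Conlon's exposition) is to never let small atoms enter the energy bookkeeping: either maintain a distinguished exceptional class throughout and sum the index only over pairs of \emph{non-exceptional} parts, all of which are kept of size at least $\eta|V|$ by construction, or round the witness sets so that every atom produced has size at least $\eta|V|$. With that modification the increment $\Phi(\mathcal{P}_{k+1})\ge\Phi(\mathcal{P}_k)+\delta/2$ is established directly between equipartitions, the bound $\Phi\le rD^2$ applies at every step, and the rest of your quantifier bookkeeping goes through unchanged. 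This is the one place where the sparse lemma requires more than a mechanical $d\mapsto d_p$ substitution, and it is worth spelling out.
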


Now, we are ready to prove the main theorem of this section. Recall that for $r=2$, Letzter~\cite{L15} showed that $c_2=2/3$. The proof below is essentially her approach that easily extends to any numbers of colours.

\begin{proof}[Proof of Theorem~\ref{thm:multi_Gnp} for $r\ge 4$]
Let $r\in \N \setminus \{1, 2, 3\}$, $\alpha > 0$, and $p=p(n)$ be such that $pn\to\infty$ as $n\to\infty$. We will show that a.a.s.\ for every $r$-edge colouring of $G = \G(n,p)=(V,E)$ there is a monochromatic path of length at least $(1/r-\alpha)n$.

Pick $\varepsilon = \eps(\alpha) >0$ such that $(1-8\eps)(1-(r+1) \eps) \ge 1 - r \alpha$ and $1/(2r)>\varepsilon$ and set $D=2$. Apply the sparse regularity lemma with above defined $\varepsilon, D$, and $r$. Let $\eta$ and $T$ be the constants arising from this lemma.

For each $i \in [r]$, let $G_i$ be a subgraph of $G$ induced by the edges coloured with colour~$i$. By Chernoff's bound, for any $U$ and $W$ of size at least $\eta n$, the $p$-density $d_p(U,W)$ in $G$ is at most 2 and so the $p$-density in each $G_i$ is also at most 2. (Indeed, there are obviously at most $(2^n)^2=4^n$ choices for $U$ and $W$, and for each choice the failure probability is at most $2 \exp( - \eta^2 n^2 p/3) = o(4^n)$.) Thus, each $G_i$ is an $(\eta,p,D)$-upper-uniform graph. Consequently, Theorem~\ref{SRL} implies that there is an equipartition of $V=V_1\cup V_2\cup \dots\cup V_s$, where $1/\varepsilon \le s\le T$, for which all but at most $\varepsilon \binom{s}{2}$ of the pairs induce an $(\eps,p)$-regular pair in each $G_i$.

Let $R$ be the auxiliary (cluster) graph with vertex set $[s]$, where $\{i,j\}$ is an edge if and only if $V_i$, $V_j$ induce an $(\eps,p)$-regular bipartite graph in each of the $r$ colours. Colour $\{i,j\}$ in $R$ by the majority colour appearing between $V_i$ and $V_j$ in $G$. Again by Chernoff's bound the $p$-density $d_p(V_i,V_j)$ in $G$ is at least $1/2$. Hence, if $\{i,j\}$ is coloured by $c$, then $d_p(V_i,V_j)$ in $G_c$ is at least $1/(2r)$.

Observe that the number of edges in $R$ is at least $(1-\varepsilon)\binom{s}{2}$. Hence, it follows from Corollary~\ref{cor:erdos_gallai} that $R$ contains a monochromatic, say red, path $P = (i_1, i_2, \dots,i_\ell)$ on at least $\ell = (1-\varepsilon)s/r$ vertices. Furthermore, we divide each set $V_{i_j}$ into two sets $U_j,W_j$ of equal sizes, that is, $|U_j| = |W_j| = n/(2s)$. Let $P_j$ be a longest red path in the bipartite graph $G[U_j, W_{j+1}]$. Since $V_{i_j}$ and $V_{i_{j+1}}$ are $(\varepsilon, p)$-regular with $p$-density at least $1/(2r)$, Lemma~\ref{lem:obs} implies that $P_j$ covers at least $(1-4\varepsilon)n/s$ vertices of $G[U_j,W_{j+1}]$ for each $1\le j\le \ell-1$.

Now, we are going to glue $P_1,P_2,\dots,P_{\ell-1}$, trying to lose as few vertices as possible. Let $X_j$ be the last $\varepsilon n/s$ vertices of $P_{j}$ in $U_j$, and let $Y_{j+1}$ be the first $\varepsilon n/s$ vertices of $P_{j+1}$ in $U_{j+1}$. Since $V_j, V_{j+1}$ is an $(\eps,p)$-regular pair (in the graph induced by red edges) with $p$-density at least $1/(2r)>\varepsilon$, there must be a red edge between $X_j$ and $Y_{j+1}$. Thus, $G$ has a red path $Q$ which contains all vertices of $V(P_1)\cup V(P_2)\cup \dots\cup V(P_{\ell-1})$ but at most $4\varepsilon(\ell-1) n/s$. Consequently,
\begin{align*}
|V(Q)| &\ge (\ell-1)(1-4\varepsilon)n/s - 4\varepsilon (\ell-1)n/s = (1-8\varepsilon) (\ell-1)n/s\\
&\ge (1-8\varepsilon) (1-(r+1) \varepsilon)n/r \ge (1/r - \alpha)n,
\end{align*}
as required.
\end{proof}

Now we show how to prove Theorem~\ref{thm:multi_Gnp} for $r=3$. The proof is based on an ingenious idea of Figaj and \L{}uczak~\cite{FL07} of ``connected matchings'' and relies on the following lemma.

\begin{theorem}[\cite{FL07}]\label{thm:matchings}
Let $0<\varepsilon \le 0.001$ and let $G$ be a graph of order $n$ with at least $(1-\varepsilon)\binom{n}{2}$ edges. Then, for any $3$-colouring of the edges of $G$, there is a monochromatic component which contains a matching saturating at least $(1/2-5\varepsilon^{1/7})n$ vertices.
\end{theorem}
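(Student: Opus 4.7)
I would prove this theorem by combining a Gy\'arf\'as-style large-component argument with matching--cover duality. Since $G$ is missing at most $\eps\binom{n}{2}$ edges, I first regard them as an auxiliary fourth ``ghost'' color of density $O(\eps)$, so the task reduces to finding, in one of the three true colors, a connected subgraph containing a matching saturating close to $n/2$ vertices of $K_n$.

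The first step is to locate a large monochromatic component. The classical theorem of Gy\'arf\'as guarantees, in any $3$-edge-coloring of $K_n$, a monochromatic component of order at least $\lceil n/2 \rceil$; a routine stability version extends this to almost-complete graphs at a cost of $O(\eps^{1/c})$ for some absolute constant $c$. Assume red supplies a connected subgraph $C$ with $|C| \ge (1/2 - O(\eps^{1/c}))n$. The more delicate second step is to show that $C$ contains a large red matching. Suppose for contradiction that the maximum red matching inside $C$ saturates fewer than $(1/2 - 5\eps^{1/7})n$ vertices. The Berge--Tutte (Gallai--Edmonds) formula applied to the red subgraph of $C$ yields a set $S\subseteq C$ for which the number of odd components of the red graph on $C\setminus S$ exceeds $|S|$ by $\Omega(\eps^{1/7}n)$. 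Selecting one vertex from each such odd component produces a set $I\subseteq C\setminus S$ of size $\Omega(\eps^{1/7}n)$ no two of whose vertices are joined by a red edge, so every edge inside $I$ is blue, green, or missing.

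Now I would invoke the two-color analogue of the theorem: in any $2$-edge-coloring of an almost-complete graph on $m$ vertices, some color class contains a spanning connected subgraph with a near-perfect matching. Applied to $I$, this produces a monochromatic (blue or green) connected matching of near-maximal size, which, after being merged with the matching structure already present inside $C$ together with appropriately chosen edges linking $I$ to $V\setminus I$ (of which only $O(\eps) n^2$ are missing), can be arranged to lie inside a single monochromatic component and to saturate at least $(1/2 - 5\eps^{1/7})n$ vertices, contradicting the assumption.

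The main obstacle is Step 2: the Berge--Tutte deficient pair $(S, R[C\setminus S])$ must be converted into a genuinely red-independent subset $I$ of sufficient size, and one must then verify that the final matching resides inside one connected monochromatic component rather than being spread across several. The peculiar exponent $1/7$ in the error term reflects the compounded losses from three separate $\eps$-approximations: the stability version of Gy\'arf\'as' theorem, the Berge--Tutte deficiency estimate, and the two-color matching argument applied to a slightly perturbed almost-complete graph.
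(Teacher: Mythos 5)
The paper does not give a proof of this statement: it is imported verbatim from Figaj and \L{}uczak~\cite{FL07}, who establish it as a lemma inside their determination of the Ramsey number for triples of long even cycles. So your sketch has to stand or fall on its own.

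Your first two steps are reasonable in spirit, but the final ``merging'' step has a genuine gap and, as written, the argument cannot close. You locate a red component $C$ of order roughly $n/2$, assume its red matching misses $\Omega(\eps^{1/7}n)$ vertices, and extract from Gallai--Edmonds a red-independent set $I\subseteq C$ with $|I|=\Omega(\eps^{1/7}n)$. Applying the two-colour connected-matching result to $I$ then yields a blue or green connected matching saturating at most $|I|=O(\eps^{1/7}n)$ vertices. But the theorem requires a single monochromatic component containing a matching saturating about $n/2$ vertices. You propose to ``merge'' the small blue/green matching on $I$ with ``the matching structure already present inside~$C$''; that structure is a \emph{red} matching, so the union is bichromatic and is not a matching inside one monochromatic component. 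Even ignoring the colour mismatch, adjoining edges from $I$ to $V\setminus I$ can enlarge a matching anchored in $I$ to at most $2|I|=O(\eps^{1/7}n)$ saturated vertices, which is nowhere near the required $(1/2-5\eps^{1/7})n$. The actual Figaj--\L{}uczak argument must instead leverage the bipartite structure between the whole red component $C$ and its complement $V\setminus C$ (where every present edge is blue or green, since $C$ is a red component), and go through a more delicate case analysis; starting from the tiny set $I$ alone cannot reach a linear-sized connected matching. Until that step is replaced by a mechanism that produces a monochromatic connected matching on a constant fraction of $n$, the proof is incomplete.
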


\begin{proof}[Proof of Theorem~\ref{thm:multi_Gnp} for $r=3$]
The proof is very similar to the case $r\ge 4$. Therefore, we only emphasize differences.
As in the previous case we apply the sparse regularity lemma to an $r$-coloured graph $G$ and then Theorem~\ref{thm:matchings} to the $r$-coloured cluster graph~$R$ of order $s$. This way we obtain a monochromatic, say red, minimal component~$F$ which contains a matching~$M$ saturating at least $\ell=(1/2-5\varepsilon^{1/7})s$ vertices of $R$. Let $W=(i_1, i_2, \dots i_k)$ be a minimal walk contained in $F$ which contains~$M$. Clearly, $F$ is a tree and so $k \le 2(s-1)$. For each $e\in M$ we find the first appearance of $e$ in $W$, say $(i_{j},i_{j+1})$, and replace it by a red path $P_j$ of length $(1-4\varepsilon)n/s$ which alternates between $V_{i_j}$ and $V_{i_{j+1}}$ in $G$. Clearly, 
\[
\sum_{j=1}^{\ell} |P_j| \ge \ell\cdot (1-4\varepsilon)n/s = (1/2-5\varepsilon^{1/7})(1-4\varepsilon)n.
\]
Finally using elementary properties of $(\varepsilon,p)$-regular pairs we glue all $P_j$'s (following the order in $W$) as in the previous case loosing only $poly(\varepsilon)n$ vertices.
\end{proof}

It immediately follows from the above proof that a better constant in Corollary~\ref{cor:erdos_gallai} yields a bigger value of $c_r$.

\section{Large monochromatic components in $\G(n,p)$}\label{sec5}

It is easy to see that in every 2-colouring of the edges of $K_n$ there is a monochromatic connected subgraph on $n$ vertices. For three colours the analogue problem was first solved by Gerencs{\'e}r and Gy{\'a}rf{\'a}s~\cite{GG67} (see also \cite{A70,BB85}). The generalization of this result to any number of colours was proved by Gy{\'a}rf{\'a}s~\cite{G77} and it also follows from a more general result of F\"uredi~\cite{F81}. 

\begin{theorem}[\cite{G77,F81}]\label{thm:G77}
Let $r\in \N \setminus\{1\}$. Suppose that the edges of $K_n$ are coloured with $r$ colours. Then, there is a monochromatic component with at least $n/(r-1)$ vertices. This result is sharp if $r-1$ is a prime power and $(r-1)^2$ divides $n$.
\end{theorem}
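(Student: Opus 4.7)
The plan is to prove the stronger \emph{covering} statement, due to Gy\'arf\'as: in any $r$-colouring of $E(K_n)$, the vertex set $V(K_n)$ can be covered by at most $r-1$ monochromatic components. Pigeonhole then immediately yields a monochromatic component on at least $n/(r-1)$ vertices, which is precisely the lower bound in the theorem.

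I would prove the covering statement by induction on $r$. The base case $r=2$ is the standard observation that if the colour-$1$ graph is disconnected, its complement (which contains the complete bipartite graph between any two colour-$1$ components) is connected, so a single monochromatic component covers $V$. For the inductive step, fix colour $r$ and let $A_1, \ldots, A_k$ be its connected components. If $k=1$, a single colour-$r$ component already covers everything. Otherwise, all edges between distinct $A_i$ and $A_j$ use colours from $\{1,\ldots,r-1\}$, so the bipartite graph $K_{|A_i|,|A_j|}$ is $(r-1)$-coloured; one then combines the inductive hypothesis (applied inside suitable pieces) with an auxiliary covering result for complete bipartite graphs to assemble a compatible global cover of $V$ by at most $r-1$ monochromatic components of $G$.

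For the sharpness claim when $q := r-1$ is a prime power and $q^2 \mid n$, I would use the affine plane $AG(2,q)$: it has $q^2$ points, and its lines form $q+1$ parallel classes, each class partitioning the points into $q$ disjoint lines of size $q$. Blow up each affine point to a cluster of $n/q^2$ vertices; colour every edge between two distinct clusters by the parallel class of the unique affine line through the corresponding points, and colour edges inside each cluster arbitrarily with any of the $r=q+1$ colours. Since a parallel class partitions the point set, each monochromatic component is exactly the union of the $q$ clusters on one line, so it has $q \cdot (n/q^2) = n/(r-1)$ vertices, matching the bound.

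The main obstacle is the inductive step of the covering lemma: covers produced by the inductive hypothesis on the various $(r-1)$-coloured bipartite pieces between pairs of colour-$r$ components need to be stitched together into a single cover of size $r-1$, rather than a naive $\Omega(k)$ or $\Omega(r^2)$ collection. Making this bookkeeping precise is exactly the core of F\"uredi's argument in~\cite{F81}; an alternative route would be a direct vertex-picking argument (choosing one or two vertices and analysing the sizes of their monochromatic components), but extracting the sharp $n/(r-1)$ bound that way rather than the weaker $n/r$ bound from a single star decomposition is precisely where the difficulty lies.
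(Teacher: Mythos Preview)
The paper does not prove Theorem~\ref{thm:G77}: it is stated as a known result with citations to Gy\'arf\'as~\cite{G77} and F\"uredi~\cite{F81}, and is then used as a black box (its perturbed version, Lemma~\ref{lem:mono_comps}, is what the authors actually establish, via Lemma~\ref{claim:bipart}). So there is no ``paper's own proof'' to compare against.

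Taken on its own, your proposal is a reasonable outline but not a proof. The reduction to the covering statement and the affine-plane sharpness construction are both correct and standard. The genuine gap is exactly where you say it is: your inductive step fixes the colour-$r$ components $A_1,\ldots,A_k$ and observes that the edges between them use only $r-1$ colours, but you then have nothing to apply the inductive hypothesis \emph{to}---the bipartite graphs $K[A_i,A_j]$ are not complete graphs on a single vertex set, and there is no obvious way to glue local $(r-1)$-covers of all the pairs into a global one of size $r-1$. You acknowledge this (``making this bookkeeping precise is exactly the core of F\"uredi's argument''), which is honest, but it means the lower-bound part of the theorem is left unproved. Note also that F\"uredi's argument in~\cite{F81} is not really a bookkeeping refinement of this induction; it goes through fractional matchings in intersecting hypergraphs, which is a different mechanism. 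A cleaner self-contained route is Gy\'arf\'as's original argument: fix a vertex $v$, take its $r$ monochromatic components $C_1(v),\ldots,C_r(v)$ (which cover $V$), and show that if no $r-1$ of them suffice then the ``private'' witnesses $u_i\in C_i(v)\setminus\bigcup_{j\neq i}C_j(v)$ lead to a contradiction via an auxiliary analysis of the colours among the $u_i$'s and $v$; this avoids the stitching problem entirely.
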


In this section we consider a similar problem for $\G(n,p)$. The following was proven by Sp{\"o}hel, Steger and Thomas~\cite{SST10} and also independently by Bohman, Frieze, Krivelevich, Loh and Sudakov~\cite{BFKLS11}. 

\begin{theorem}[\cite{SST10,BFKLS11}]
Let $r\in \N \setminus\{1\}$ and let $\tau_r$ denote the constant which determines the threshold for $r$-orientability of the random graph $\G(n,rc/n)$. Then, for any constant $c > 0$ the following holds a.a.s.
\begin{enumerate}[(i)]
\item If $c < \tau_r$, then there exists an $r$-colouring of the edges of $\G(n,rc/n)$ in which all monochromatic
components have $o(n)$ vertices.
\item If $c > \tau_r$, then every $r$-colouring of the edges of $\G(n,rc/n)$ contains a monochromatic component with $\Theta(n)$ vertices.
\end{enumerate}
\end{theorem}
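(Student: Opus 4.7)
My plan is to exploit the well-known equivalence, due to Hakimi, between $r$-orientability and decomposability into pseudoforests: a graph $G$ admits an orientation with all out-degrees at most $r$ if and only if every subgraph $H$ of $G$ satisfies $|E(H)| \le r|V(H)|$, and in that case one can $r$-edge-colour $G$ so that each colour class has maximum out-degree $1$ in the given orientation (hence is a pseudoforest, i.e., a graph with at most one cycle per component). The threshold $\tau_r$ is precisely the place where this structural condition first fails a.a.s.\ in $\mathcal{G}(n, rc/n)$.

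For part~(i), below the threshold I would take the canonical colouring derived from any fixed $r$-orientation: at each vertex assign the (at most $r$) out-edges distinct labels from $[r]$. Each colour class then has maximum out-degree~$1$, so in particular it is a pseudoforest with at most $|V|$ edges and average degree at most~$2$. The point is then to show that in the regime $c < \tau_r$, by choosing the orientation carefully (for instance via the natural peeling algorithm that repeatedly removes minimum-degree vertices and orients the removed edges away from them), each colour class has a branching-process exploration with mean offspring strictly less than one. This subcriticality, together with standard random graph concentration, rules out a linear-sized component in any colour and yields monochromatic components of size $o(n)$.

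For part~(ii), above the threshold $\mathcal{G}(n, rc/n)$ is a.a.s.\ not $r$-orientable, so Hakimi's theorem produces a subgraph $H$ with $|E(H)| > r|V(H)|$. The crucial strengthening, and the technical core of the argument, is to exhibit such an $H$ that additionally spans $\Theta(n)$ vertices and has good expansion; the natural candidate is (a subgraph of) the $(r+1)$-core of $\mathcal{G}(n,rc/n)$, whose size and density can be tracked via differential-equation analysis of the peeling process just above $\tau_r$. Given such an $H$, for any $r$-edge-colouring of $\mathcal{G}(n,rc/n)$ an averaging argument yields a colour whose restriction to $V(H)$ contains at least $(1+\varepsilon)|V(H)|$ edges, and expansion of $H$ then forces this colour class to have a connected subgraph on $\Theta(n)$ vertices.

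The main obstacle is part~(ii): converting the purely combinatorial non-orientability witness into a uniform statement that \emph{every} $r$-colouring produces a linear-sized monochromatic component. This requires delicate control over the $(r+1)$-core (or a comparable dense substructure) of $\mathcal{G}(n,rc/n)$ just above $\tau_r$, together with an expansion-type lower bound ensuring that any subgraph with slightly more edges than vertices inside this core must itself span a positive fraction of the vertices. This is precisely the step carried out by \cite{SST10} and \cite{BFKLS11} via a careful analysis of the orientability peeling process and of the geometry of the core; I would follow their approach at this juncture rather than attempt a different route.
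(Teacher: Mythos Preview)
The paper does not prove this theorem at all: it is quoted verbatim as a known result of Sp\"ohel--Steger--Thomas and Bohman--Frieze--Krivelevich--Loh--Sudakov, with no argument supplied, and is used only as background before the paper moves on to its own result (Theorem~\ref{thm:mono_comps}) about the regime $pn\to\infty$. Consequently there is no ``paper's own proof'' to compare your proposal against.

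For what it is worth, your outline is a reasonable high-level summary of how the cited papers proceed (orientability/pseudoforest decomposition below threshold, analysis of the dense core above threshold), and you correctly flag that the real content of part~(ii) lies in those references. But since you were asked to supply a proof for a statement that the present paper merely cites, the appropriate response here is simply to note that no proof is given in the paper and to defer to \cite{SST10,BFKLS11}.
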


Here we complement this result considering the case when the average degree tends to infinity (as $n \to \infty$). This time, we are interested in the following constant:
\begin{eqnarray*}
d_r = \sup\{ d \in [0,1] &:& \G(n,p) \ \text{ has a monochromatic component} \\ 
&& \text{ on at least $dn$ vertices a.a.s., provided $np \to \infty$} \}.
\end{eqnarray*}
Clearly $d_r \ge c_r$, where $c_r$ is defined as in the previous section (cf.~\eqref{eq:cr}). 

\begin{theorem}\label{thm:mono_comps}
Let $r\in \N \setminus\{1\}$, $\alpha > 0$ be an arbitrarily small constant, and $p=p(n)$ be such that $pn\to\infty$. Then, a.a.s.\ for any $r$-colouring of the edges of $\G(n,p)$ there is a monochromatic component on at least $(1/(r-1)-\alpha)n$ vertices, which implies that $d_r = 1/(r-1)$. This constant is optimal for infinitely many $r$.
\end{theorem}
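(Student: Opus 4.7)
My plan is to adapt the sparse-regularity framework used in the proof of Theorem~\ref{thm:multi_Gnp}. Fix $\alpha > 0$ and choose $\eps > 0$ much smaller than $\alpha/(r-1)$ and than $1/T$, where $T$ will be the upper bound on the number of clusters produced by the Sparse Regularity Lemma; set $D = 2$. Apply Theorem~\ref{SRL} to the colour classes $G_1, \ldots, G_r$ of an arbitrary $r$-edge colouring of $G = \G(n,p)$. As in the proof of Theorem~\ref{thm:multi_Gnp}, each $G_i$ is $(\eta, p, D)$-upper-uniform a.a.s.\ by Chernoff's bound, so we obtain an equipartition $V_1 \cup \cdots \cup V_s$ with $1/\eps \le s \le T$ in which all but $\eps\binom{s}{2}$ pairs are $(\eps, p)$-regular in every colour. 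I then build a cluster graph $R$ on $[s]$ whose edges are the $(\eps, p)$-regular pairs with $p$-density in $G$ at least $1/2$ (this density condition holds a.a.s.\ for every pair by Chernoff), and I colour each edge of $R$ by its majority colour in $G$, so that each edge has $p$-density at least $1/(2r)$ in its own colour.

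Since $R$ has at least $(1-\eps)\binom{s}{2}$ edges, the heart of the argument is a perturbed analogue of Theorem~\ref{thm:G77}: for every $r$ and every $\delta > 0$ there is $\eps_0 > 0$ such that any $r$-edge colouring of a graph on $s$ vertices with at least $(1-\eps_0)\binom{s}{2}$ edges contains a monochromatic connected component on at least $(1/(r-1) - \delta)s$ vertices. I would prove this by first discarding the at most $2\sqrt{\eps}\, s$ vertices incident to more than $\sqrt{\eps}\, s$ missing edges of $R$, then embedding the remaining near-complete $r$-coloured graph into $K_{s'}$ (colouring each phantom edge by any colour), applying Theorem~\ref{thm:G77} to this complete graph, and finally controlling how much the genuine monochromatic component can shrink once the $O(\sqrt{\eps})$-fraction of phantom edges is removed. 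The rigidity of the extremal affine-plane constructions that achieve equality in Theorem~\ref{thm:G77} makes this quantitative stability step credible but delicate.

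Given a monochromatic, say red, connected subgraph $H$ of $R$ on $t \ge (1/(r-1) - \delta)s$ vertices, I fix a spanning tree $\mathcal{T}$ of $H$ and consider $W = \bigcup_{i \in V(H)} V_i$, a set of $tn/s$ vertices of $G$. Each tree edge $\{i,j\} \in \mathcal{T}$ corresponds to an $(\eps, p)$-regular bipartite pair $(V_i, V_j)$ of red $p$-density at least $1/(2r) > \eps$, and a standard regularity argument (essentially the gluing step in the proof of Theorem~\ref{thm:multi_Gnp}) shows that all but at most $O(\eps)(|V_i|+|V_j|)$ vertices of this pair share a common component in the red bipartite subgraph. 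Chaining along $\mathcal{T}$ therefore produces a single red connected subgraph of $G$ on at least $(1 - O(T\eps))\, tn/s \ge (1/(r-1) - \alpha)n$ vertices, provided $\eps$ was chosen small enough at the start. This yields $d_r \ge 1/(r-1)$.

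For the matching upper bound when $r-1$ is a prime power and $(r-1)^2 \mid n$, I would inherit the extremal $r$-colouring of $K_n$ guaranteed by Theorem~\ref{thm:G77} and simply restrict it to the (random) edges of $\G(n,p)$; the monochromatic components of this restricted colouring are subsets of the monochromatic components of the $K_n$ colouring and hence have at most $\lceil n/(r-1) \rceil$ vertices. Together with the lower bound this yields $d_r = 1/(r-1)$ for infinitely many $r$. The main technical obstacle, as already noted, is the quantitative perturbed version of Theorem~\ref{thm:G77}: everything else is a straightforward translation of the regularity-lemma argument behind Theorem~\ref{thm:multi_Gnp}, but the stability step from near-complete to complete graphs, and the explicit control of the error $g(\eps) \to 0$, is where most of the real work will go.
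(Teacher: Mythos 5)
Your overall framework matches the paper's: apply the sparse regularity lemma to the colour classes, form a cluster graph $R$ with at least $(1-\eps)\binom{s}{2}$ edges, invoke a perturbed analogue of Theorem~\ref{thm:G77} (the paper's Lemma~\ref{lem:mono_comps}) to find a large monochromatic connected subgraph of $R$, and chain along a monochromatic spanning tree using regularity to produce a large monochromatic component in $\G(n,p)$. You also correctly observe that the sharpness is inherited directly from the extremal colouring of $K_n$, since restricting a colouring to a subgraph can only shrink its monochromatic components.

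The genuine gap is in your proposed proof of the perturbed lemma. Embedding the near-complete cluster graph into $K_{s'}$ after discarding the high-deficiency vertices, applying Theorem~\ref{thm:G77} to the completed colouring, and then trying to ``control how much the genuine monochromatic component can shrink'' is not a proof. Theorem~\ref{thm:G77} only guarantees a large monochromatic component $C$ \emph{in the completed graph}, and $C$ may be held together almost entirely by the phantom edges; a connected graph on $m$ vertices can be disconnected into many small pieces by removing a set of $m-1$ edges, which is a vanishing fraction of $\binom{m}{2}$. There is no available stability theorem for Theorem~\ref{thm:G77} that rules this out, and the appeal to the rigidity of the affine-plane extremal constructions does not by itself control how a near-extremal, near-complete colouring can fragment once you delete the phantom edges.

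The paper avoids this obstacle entirely with a short direct argument for Lemma~\ref{lem:mono_comps}. By Corollary~\ref{cor:erdos_gallai} (Erd\H{o}s--Gallai applied to the majority colour) a largest monochromatic component $C$ already has at least $(1/r-\eps)n$ vertices. Suppose for contradiction that $|V(C)| < (1/(r-1) - \eps r^2)n$, and consider the bipartite graph $F$ induced between $V(C)$ and $V\setminus V(C)$. None of $F$'s edges have the colour of $C$ (otherwise $C$ was not a maximal monochromatic component), so $F$ is coloured with at most $r-1$ colours, and $F$ still contains all but an $O(\eps r^2)$-fraction of the possible cross edges. Applying Lemma~\ref{claim:bipart} (Liu, Morris, Prince) to the majority colour of $F$, with $\eta \ge \tfrac{1}{r-1}(1-\eps r^2)$, produces a monochromatic component on at least $\eta n \ge (1/(r-1) - \eps r^2)n$ vertices, contradicting the assumed maximality of $C$. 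You should replace your stability step by this argument; the rest of your sketch is essentially what the paper does.
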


First we derive an analogous result to Corollary~\ref{cor:erdos_gallai}, which is a perturbed version of Theorem~\ref{thm:G77}.

\begin{lemma}\label{lem:mono_comps}
Let $r\in \N \setminus\{1\}$ and $0<\varepsilon \le 1/r^2$. Let $G$ be a graph of order $n$ with at least $(1-\varepsilon)\binom{n}{2}$ edges. Then, for any $r$-colouring of the edges of $G$ there is a monochromatic component on at least $(1/(r-1)-\varepsilon r^2)n$ vertices.
\end{lemma}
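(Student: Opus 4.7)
The plan is to prove Lemma~\ref{lem:mono_comps} by induction on $r$, adapting the inductive proof of Gy\'arf\'as's Theorem~\ref{thm:G77} to graphs with a small fraction of missing edges. First I would establish a slightly stronger weighted version of the statement, in which each vertex carries a weight and the conclusion is a monochromatic component of total weight at least $(1/(r-1) - \varepsilon r^2) W$, where $W$ is the sum of vertex weights; the lemma itself corresponds to the case of unit weights.

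For the inductive step, assume the result for $r - 1$ colours. If some $G_1$-component has size at least $K := (1/(r-1) - \varepsilon r^2) n$, we are done. Otherwise, contract each colour-$1$ component $C_j$ to a single super-vertex of weight $w_j = |C_j|$, obtaining a weighted quotient graph $Q$. Each super-edge $\{C_j, C_k\}$ receives any colour from $\{2, \dots, r\}$ that appears among the $G$-edges between $C_j$ and $C_k$; the super-edge is declared \emph{missing} when no such edge exists. A short calculation bounds the weighted missing-edge fraction of $Q$ by $\varepsilon(r-1)/(r-2)$, using that the total weighted mass of pairs equals $(n^2 - \sum_j w_j^2)/2 \ge n(n-K)/2$ while the missing weighted mass is at most $\varepsilon \binom{n}{2}$. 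Since $\varepsilon(r-1)/(r-2) \le 1/(r-1)^2$ for all $r \ge 3$ and $\varepsilon \le 1/r^2$, the weighted inductive hypothesis applies to $Q$ with $r-1$ colours and $\varepsilon' := \varepsilon(r-1)/(r-2)$, yielding a monochromatic component of total weight at least $(1/(r-2) - \varepsilon'(r-1)^2) n$ in some colour $c \in \{2,\dots,r\}$. Pulling back to $G$ gives a $G_c$-component of at least this many vertices, and an elementary algebraic check (using the identity $1/(r-2) - 1/(r-1) = 1/((r-2)(r-1))$ and $\varepsilon' = \varepsilon(r-1)/(r-2)$) shows that this quantity is at least $(1/(r-1) - \varepsilon r^2) n$, as required.

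The base case $r = 2$, asserting a monochromatic component of size at least $(1-4\varepsilon) n$, requires a separate direct argument. If both $G_1$ and $G_2$ have largest components of size strictly less than $(1-4\varepsilon) n$, then I would choose partitions $V = X_1 \cup Y_1$ and $V = X_2 \cup Y_2$ respecting the $G_1$- and $G_2$-component structures (so that no $G_i$-edges cross the $i$th partition). The two ``opposite'' pairs of intersections, namely $(X_1 \cap X_2, Y_1 \cap Y_2)$ and $(X_1 \cap Y_2, Y_1 \cap X_2)$, must consist entirely of non-edges of $G$, forcing at least $|X_1 \cap X_2| \cdot |Y_1 \cap Y_2| + |X_1 \cap Y_2| \cdot |Y_1 \cap X_2|$ non-edges in total. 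The main obstacle will be choosing the partitions to maximise this quantity uniformly across all possible distributions of $G_1$- and $G_2$-component sizes so that it exceeds $\varepsilon \binom{n}{2}$ whenever $\varepsilon \le 1/4$, thereby yielding the desired contradiction and completing the base case.
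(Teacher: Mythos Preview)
Your inductive step contains a genuine gap at the ``pulling back'' stage. A colour-$c$ component in the quotient $Q$ is a collection of super-vertices $C_{j_1},\dots,C_{j_m}$ joined by colour-$c$ super-edges, but each such super-edge only witnesses \emph{one} colour-$c$ edge of $G$ between the corresponding clusters, and inside each $C_{j_i}$ you have only $G_1$-connectivity, not $G_c$-connectivity. Hence $\bigcup_i C_{j_i}$ is connected in $G_1\cup G_c$, not in $G_c$ alone, and there is no reason for it to lie in a single $G_c$-component of the claimed size. Concretely, take $C_1=\{a,b\}$, $C_2=\{c,d\}$ with $G_1$-edges $ab,cd$ and a single colour-$c$ edge $ac$; the super-edge $\{C_1,C_2\}$ is colour~$c$, yet the $G_c$-component of $a$ is just $\{a,c\}$. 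This difficulty is not an artefact of the perturbed setting: the same obstruction blocks a naive ``contract and induct'' proof of Gy\'arf\'as's theorem for $K_n$ itself, so the approach needs a different idea, not just more care.

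Your base case is also not settled. In the regime you care about ($\varepsilon$ small), the adversary can force both colour classes to have exactly two components, of sizes roughly $(1-4\varepsilon)n$ and $4\varepsilon n$, arranged so that the two small parts are disjoint. Then the only admissible partitions give $ad+bc\approx (4\varepsilon n)^2=16\varepsilon^2 n^2$, which is below $\varepsilon\binom{n}{2}$ once $\varepsilon<1/32$; so the inequality you need simply fails, and no clever choice of partition rescues it because there are no other partitions to choose. The paper avoids both problems with a direct (non-inductive) argument: take the largest monochromatic component $C$, observe that the bipartite graph between $V(C)$ and its complement uses only $r-1$ colours, pass to the majority colour there, and invoke the Liu--Morris--Prince lemma (Lemma~\ref{claim:bipart}) to extract a single monochromatic component of size at least $\frac{1}{r-1}(1-\varepsilon r^2)n$, contradicting the maximality of $C$.
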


Let us note that a special case of this result for $r=3$ was obtained by Figaj and \L{}uczak~\cite{FL07}. Our proof is different; we will use the following result of Liu, Morris and Prince~\cite{LMP09}.

\begin{lemma}[Lemma 9 in \cite{LMP09}]\label{claim:bipart}
Let $H=(V_1,V_2,E)$ be a bipartite graph. Assume that $|E| \ge \eta |V_1||V_2|$ for some $\eta>0$. Then, $H$ has a component on at least $\eta(|V_1|+|V_2|)$ vertices. 
\end{lemma}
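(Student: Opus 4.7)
The plan is to reduce the statement to a short inequality about the ``parallel-sum'' function $h(x,y) := xy/(x+y)$. First, enumerate the connected components $C_1, \ldots, C_k$ of $H$ and set $a_i = |C_i \cap V_1|$, $b_i = |C_i \cap V_2|$. Since each $C_i$ spans a bipartite graph with parts of sizes $a_i$ and $b_i$, we trivially have $e(C_i) \le a_i b_i$, and summing over components gives
\[
\eta\,|V_1|\,|V_2| \le |E| = \sum_{i=1}^k e(C_i) \le \sum_{i=1}^k a_i b_i.
\]
Write $s^* := \max_i (a_i+b_i)$ for the largest component size, which is exactly the quantity we want to lower-bound.

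The key manoeuvre is to factor $a_i b_i = (a_i+b_i)\cdot h(a_i,b_i)$ and pull out the maximum:
\[
\sum_{i=1}^k a_i b_i = \sum_{i=1}^k (a_i+b_i)\,h(a_i,b_i) \le s^* \sum_{i=1}^k h(a_i,b_i).
\]
This reduces the lemma to proving the super-additivity
\[
\sum_{i=1}^k h(a_i,b_i) \le h\!\left(\sum_i a_i,\ \sum_i b_i\right) = \frac{|V_1|\,|V_2|}{|V_1|+|V_2|},
\]
because then combining the two displays yields $\eta\,|V_1|\,|V_2| \le s^* \cdot |V_1|\,|V_2|/(|V_1|+|V_2|)$, i.e.\ $s^* \ge \eta(|V_1|+|V_2|)$, which is exactly the desired conclusion.

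The remaining point, and the technical core of the argument, is the super-additivity of $h$ on pairs of non-negative reals. I would prove it by induction on $k$, so that everything collapses to the base case $k=2$. For two terms the cleanest route is the algebraic identity
\[
h(a+c,\,b+d) - h(a,b) - h(c,d) \;=\; \frac{(ad-bc)^2}{(a+b+c+d)(a+b)(c+d)} \;\ge\; 0,
\]
which one verifies by clearing denominators and simplifying. (Equivalently, one may note that $h$ is jointly concave on $(0,\infty)^2$, since its Hessian is negative semidefinite, and positively homogeneous of degree one, so that $h(x_1+x_2,y_1+y_2)=2h\!\left(\tfrac{x_1+x_2}{2},\tfrac{y_1+y_2}{2}\right)\ge h(x_1,y_1)+h(x_2,y_2)$ by concavity.) This is the only non-obvious step, but it is a routine inequality rather than a substantive obstacle.
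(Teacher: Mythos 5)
Your proof is correct and self-contained. A caveat first: the paper does not actually prove this lemma --- it is imported verbatim as Lemma~9 of Liu, Morris and Prince~\cite{LMP09} --- so there is no in-paper proof to compare against, only the external source. That said, the argument you give is essentially the standard one for this statement, and I believe it is in the spirit of the LMP proof.

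The decomposition $a_ib_i = (a_i+b_i)\,h(a_i,b_i)$ with $h(x,y)=xy/(x+y)$, followed by super-additivity of $h$ (equivalently: concavity plus positive $1$-homogeneity of the scaled harmonic mean), is exactly the right tool here. I verified the identity
\[
h(a+c,b+d)-h(a,b)-h(c,d)=\frac{(ad-bc)^2}{(a+b+c+d)(a+b)(c+d)},
\]
which closes the base case of the induction. It is worth remarking that the sharper $h$-based bound is genuinely needed: the cruder estimate $a_ib_i\le(a_i+b_i)^2/4$ combined with $\sum(a_i+b_i)=|V_1|+|V_2|$ only gives $\max_i(a_i+b_i)\ge 4\eta|V_1||V_2|/(|V_1|+|V_2|)$, which is at least $\eta(|V_1|+|V_2|)$ only when $|V_1|=|V_2|$; for unbalanced parts it loses a factor and would not prove the lemma. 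Your parallel-sum factorisation avoids that loss. The only minor housekeeping worth noting is that $h(a_i,b_i)$ is well-defined because each component is nonempty, so $a_i+b_i\ge 1$; and the lemma is vacuous if one side is empty, so one may assume $|V_1|,|V_2|>0$.
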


\begin{proof}[Proof of Lemma~\ref{lem:mono_comps}]
Let $G=(V,E)$ be a graph of order $n$ with at least $(1-\varepsilon)\binom{n}{2}\ge \binom{n}{2} - (\varepsilon/2) n^2$ edges. For a contradiction, suppose that there is a colouring of the edges of $G$ with $r$ colours so that $C$, a largest monochromatic component in $G$, satisfies $|V(C)| < (1/(r-1)-\varepsilon r^2)n$. On the other hand, by Corollary~\ref{cor:erdos_gallai}, $|V(C)|\ge (1/r-\varepsilon)n$.

Consider the bipartite graph $F$ induced by the edges of $G$ between $V(C)$ and $V(G)\setminus V(C)$. Clearly, the edges of $F$ are coloured with at most $r-1$ colours (as the colour of $C$ is not used). First observe that 
\begin{align*}
(\varepsilon/2) n^2 &= |V(C)||V(G)\setminus V(C)| \cdot \frac{\varepsilon n^2}{2|V(C)||V(G)\setminus V(C)|}\\
&\le |V(C)||V(G)\setminus V(C)| \cdot \frac{\varepsilon n^2}{2(1/r-\varepsilon)n \cdot (1-(1/r-\varepsilon))n}.
\end{align*}
Since $\varepsilon \le 1/r^2$ and $r\ge 2$, we get $1/r - \varepsilon = (1-\varepsilon r)/r \ge (1-1/r)/r\ge 1/(2r)$. Thus,
\[
(\varepsilon/2) n^2 \le |V(C)||V(G)\setminus V(C)| \cdot \frac{\varepsilon}{2\cdot 1/(2r) \cdot (r-1)/r}
\le |V(C)||V(G)\setminus V(C)| \cdot \varepsilon r^2.
\]
Consequently,
\[
|E(F)|  \ge |V(C)||V(G)\setminus V(C)| - (\varepsilon/2) n^2
\ge (1 -  \varepsilon r^2)|V(C)||V(G)\setminus V(C)|.
\]
Let $H$ be a subgraph of $F$ induced by the majority colour. Thus,
\[
|E(H)| \ge \frac{1}{r-1} (1 -  \varepsilon r^2)|V(C)||V(G)\setminus V(C)|,
\]
and so Lemma~\ref{claim:bipart} implies that there is a monochromatic component of order 
\[
\frac{1}{r-1} (1 -  \varepsilon r^2)n \ge \left(\frac{1}{r-1} - \varepsilon r^2\right)n,
\]
that is larger than $C$, a largest monochromatic component in $G$. We get the desired contradiction and the proof is finished.
\end{proof}

Finally, we are ready to sketch the proof of the main result of this section.

\begin{proof}[Sketch of the proof of Theorem~\ref{thm:mono_comps}]
This is basically the proof of Theorem~\ref{thm:multi_Gnp} with Corollary~\ref{cor:erdos_gallai} replaced by Lemma~\ref{lem:mono_comps}. 
We find a monochromatic spanning tree on $(1/(r-1) - \varepsilon r^2)s$ vertices in the cluster graph, and then we replace each edge by a long path (in a bipartite graph). All those paths intersect yielding a large monochromatic component.
The sharpness follows immediately from the sharpness of Theorem~\ref{thm:G77}.
\end{proof}

\section{Concluding remarks}

We finish the paper with a few remarks and possible questions for future work. In this paper, we improved both a lower and an upper bound for $\ram{P_n}$, but clearly there is still a lot of work that is waiting to be done. Closing the gap is a natural question. However, it seems that in order to obtain a substantial improvement, one needs to develop a new approach to attack this question. For more colours, as we already mentioned, it is interesting to determine the order of magnitude of $\ram{P_n,r}$ as a function of $r$. Is it exponential in~$r$? Or maybe it is only polynomial in~$r$?

In this paper, we are also concerned with monochromatic paths and components in $\G(n,p)$, provided that $pn \to \infty$. Exactly the same question can be asked for $\G_{n,d}$. It is known, due to a result of Kim and Vu~\cite{KV04}, that if $d \gg \log n$ and $d\ll n^{1/3} / \log^2 n$, then there exists a coupling of $\G(n,p)$ with $p = \frac dn (1-(\log n/d)^{1/3})$, and $\G_{n,d}$, such that a.a.s.\ $\G(n,p)$ is a subgraph of $\G_{n,d}$.  A recent result of Dudek, Frieze, Ruci\'nski, and \v{S}ileikis~\cite{DFRS15} (see also Section 10.3 in~\cite{FK15}) extends that for denser graphs. Consequently, our results for $\G(n,p)$ model imply immediately the counterpart results for $\G_{n,d}$, provided $d \gg \log n$. It would be interesting to investigate the behaviour for $\Omega(1)=d=O(\log n)$.

Finally, determining the value of $c_r$ might be of some interest (cf.~\eqref{eq:cr}). Letzter~\cite{L15} showed that $c_2 = 2/3$ and in this paper we showed that $c_3 = 1/2$. For $r \in \N \setminus \{1,2,3\}$ we proved that $1/r \le c_r \le 1/(r-1)$ but the exact value of $c_r$ still  remains unknown.


\providecommand{\bysame}{\leavevmode\hbox to3em{\hrulefill}\thinspace}
\providecommand{\MR}{\relax\ifhmode\unskip\space\fi MR }
\providecommand{\MRhref}[2]{%
  \href{http://www.ams.org/mathscinet-getitem?mr=#1}{#2}
}
\providecommand{\href}[2]{#2}

\end{document}